\newtheorem{main-theorem}{Theorem}
\newtheorem{proposition}{Proposition}[section]
\newtheorem{lemma}[proposition]{Lemma}
\theoremstyle{remark}
\numberwithin{equation}{section}
\newcommand{\N}{\mathbb{N}}   
\newcommand{\R}{\mathbb{R}}
\newcommand{\IC}{\mathbb{C}} 
\newcommand{\IS}{\mathbb{S}}
\newcommand{\cG}{\mathcal{G}}
\def\SS{\mathbb{S}} 
\def\cQ{\mathcal{Q}} %
\newcommand{\norm}[1]{\lVert #1 \rVert}
\newcommand{\gae}{\gamma_{\mathrm{exp}}} 
\newcommand{\qe}{q_{\mathrm{exp}}} 
\newcommand{\qexp}[1]{\left[ #1 \right]_{\mathrm{exp}}}
\newcommand{\gexp}[1]{\left[ #1 \right]_{\mathrm{exp}}}
\newcounter{sidenote}
\DeclareMathOperator{\Spec}{\mathrm{Spec}}
\let\Im\relax
\DeclareMathOperator{\Im}{\mathrm{Im}}
\title[The Born approximation in the Calderón problem II]{The Born approximation in the three-dimensional Calderón problem II: Numerical reconstruction in the radial case}
\author[J. A. Barceló]{Juan A. Barceló} 
\address[JAB]{M$^2$ASAI. Universidad Politécnica de Madrid, ETSI Caminos, c. Profesor Aranguren s/n, 28040, Madrid, Spain.}
\email{juanantonio.barcelo@upm.es}
\author[C. Castro]{Carlos Castro} 
\address[CC]{M$^2$ASAI. Universidad Politécnica de Madrid, ETSI Caminos, c. Profesor Aranguren s/n, 28040, Madrid, Spain.}
\email{carlos.castro@upm.es}
\author[F. Macià]{Fabricio Macià}
\address[FM]{M$^2$ASAI. Universidad Politécnica de Madrid, ETSI Navales, Avda. de la Memoria, 4, 28040, Madrid, Spain.}
\email{fabricio.macia@upm.es}
\author[C. J. Meroño]{Cristóbal J. Meroño}
\address[CJM]{M$^2$ASAI. Universidad Politécnica de Madrid, ETSI Caminos, c. Profesor Aranguren s/n, 28040, Madrid, Spain.}
\email{cj.merono@upm.es}
\begin{document}

\begin{abstract}
  In this work we illustrate  a  number of properties of the Born approximation  in the three-dimensional Calderón inverse conductivity problem by numerical experiments. The results are  based on an  explicit representation formula for the Born approximation recently introduced by the authors. We focus on the particular  case of radial conductivities in the ball $B_R \subset \mathbb{R}^3 $ of radius $R$, in which the linearization of the Calderon problem is equivalent to a Hausdorff moment problem. We give numerical evidences that the Born approximation is well defined for $L^{\infty}$ conductivities, and present a novel numerical algorithm to reconstruct a radial conductivity from the Born approximation under a suitable smallness assumption.  
We also show that the Born approximation has  depth-dependent uniqueness and approximation capabilities depending on the distance (depth) to the boundary $\partial B_R$. 
We then investigate how increasing the radius $R$ affects the quality of the Born approximation, and the existence of a scattering limit as $R\to \infty$. Similar properties are also illustrated in the inverse boundary problem for the Schrödinger operator $-\Delta +q$, and strong recovery of singularity results are observed in this case. 
\end{abstract}

\maketitle

\section{Introduction} \label{sec:1_background}

Let $B_R$ be the ball of radius $0<R<\infty$ in $\mathbb{R}^d$ with $d\ge 3$, and  $\gamma \in L^\infty(B_R, \R_+) $ a bounded real non-negative conductivity such that $\gamma(x)\ge c >0 $ holds $a.e.$ in $ B_R$ for some constant $c>0$. For a given $\gamma$ we consider the so-called  Dirichlet to Neumann (DtN) map  $   \Lambda_\gamma: H^{1/2}(\partial B_R)  \to H^{-1/2}(\partial B_R)$ defined as
\begin{equation} \label{id:dnmap}
\Lambda_\gamma(f)(x) = \gamma  \partial_{\nu}   u(x)   \qquad x\in \partial B_R.
\end{equation}
Here $  \partial_\nu  $ denotes the normal derivative at the boundary, and  $u\in H^1(B_R)$ is the electrostatic potential given by the unique solution of the elliptic problem
\begin{equation}\label{id:gamma_BVP}
\left\{
\begin{array}{rl}
\nabla \cdot \left( \gamma \nabla u\right) = 0, & \mbox{ in  $B_R$}, \\
u=f, & \mbox{ on  $\partial B_R$}.
\end{array}
\right.
\end{equation}
The  Calderón inverse problem consists in recovering the conductivity $\gamma$ from the associated DtN (or voltage-to-current) map\footnote{Since $u\in H^1(B)$, a weak definition for $\Lambda_\gamma$ must be given in place of \eqref{id:dnmap}. An appropriate definition follows from the weak formulation of the boundary problem \eqref{id:gamma_BVP}.}. 
This, now classical, inverse problem can be seen as a simplified  mathematical model for {\it Electrical Impedance Tomography} (EIT), which  is related to important applications in medical imaging, nondestructive testing and geophysics. See for example the references \cite{Assenheimer_2001,cheney99,Isaacson_2006} for medical imaging applications. 

 From the mathematical point of view the first natural question is  {\it uniqueness}, which consists in determining if the map $\gamma \mapsto \Lambda_\gamma$ is injective. Uniqueness was originally proved for $C^2$ conductivities by Sylvester and Uhlmann in \cite{SU87} |see   \cite{CaroRogers16,Haberman15} for lower regularity uniqueness results| and a stability estimate was given by Alessandrini in \cite{Alessandrini88}.  
 
 Another important problem is  {\it reconstruction}, where the aim is to design a method to recover $\gamma$ from its associated DtN map.
 It is well known that the reconstruction problem is ill-posed due to the weak stability of the map $ \Lambda_\gamma \mapsto \gamma$. This means in particular that classical approaches based on least squares formulations are difficult to implement in practice. In dimension 3, a more successful attempt was addressed in the works \cite{BKM11,Delbary_2011,DHK12,DelbaryKnudsen14}, where the authors implemented a direct  reconstruction method, following the  scheme proposed by Nachman in \cite{nachman88}. However, the method is based on {\it Complex Geometrical Optic solutions} (CGOs) introduced in \cite{SU87} and, as a consequence, it requires to solve a complicated integral equation on the boundary involving highly oscillating functions; therefore it presents a number of difficult challenges at the numerical level.  We also refer to \cite{MuellerSiltanen20,MuellerSiltanenBook} for more references on the numerical aspects of EIT and for more information on the the two dimensional case that it is not addressed here. 
 
 For simplicity we assume for the moment  that $\gamma\in W^{2,\infty}(B_R,\R_+)$ and that both $\gamma-1$ and $\partial_{\nu} \gamma $ vanish  at $\partial B_R$. 
 Based on the works  \cite{SU87}  and   \cite{nachman88}, which give an explicit inverse formula for the map $\Lambda_\gamma \mapsto \gamma$, it is natural to consider   an approximation  to $\gamma$ that depends linearly on the DtN map.
 We will denote this approximation by $\gae$, which formally is given by the formula
\begin{equation}\label{eq:gexp}
    (\widehat{\gae-1})(\xi)=-\frac{2}{|\xi|^2}\lim_{|\zeta| \to \infty} \int_{\partial B_R} e^{-i x\cdot (\xi + \zeta)} (\Lambda_\gamma - \Lambda_0) e^{ix\cdot \zeta} , \quad \xi\in \mathbb{R}^d\setminus\{0\},
\end{equation}
where $\zeta \in \mathcal{V}_\xi =\{ \eta \in \mathbb{C}^d  \; :\; \eta \cdot \eta=0,\; (\xi+\eta)\cdot(\xi+\eta)=0\}$ (see \Cref{sec:2} for a detailed discussion). Here, $ \widehat{f} $ denotes the Fourier transform of a function $f$ defined as
\[
\widehat{f}(\xi) =\mathcal F_d (f)(\xi) = \int_{\R^d} e^{-ix\cdot \xi} f(x) \, dx.
\]
The expression \eqref{eq:gexp} has been considered in \cite{BKM11} and a similar definition has been used in \cite{KnudsenMueller11} as a way to obtain a numerical approximation to $\gamma$ that avoids solving a boundary integral equation in order to reconstruct the boundary values of CGOs. 
 The effectiveness of this   linearization method has been recently compared  to other  reconstruction methods    in \cite{HIKMTB21}  using synthetic data to simulate real discrete data from electrodes\footnote{It is worth mentioning that in \cite{BKM11,HIKMTB21}   the notation $\gae$ is used for a different approximation that is not linear on  $\Lambda_{\gamma}$, but is very similar in spirit to \eqref{eq:gexp}. The approximation \eqref{eq:gexp}  is denoted as $\gamma_{\mathrm{app}}$ in \cite{BKM11}.}.

The fact that $\gae$ is linear on the data, makes $\gae$ a close analogue of the Born approximation in scattering problems for Schrödinger operators $-\Delta +q$. 
For this reason from now on we will refer to $\gae$ as the {\em Born approximation} of $\gamma$. 
In scattering applications, the Born approximation   is widely used  as an acceptable approximation for the potential $q$.  
It is well known, for example, that it contains the leading singularities of $q$, and that  one can reconstruct  $q$ explicitly from its   Born approximation in the case of small potentials. See, respectively,  \cite{fix,back} and \cite{BCLV18,BCLV19} for recent results on both questions and for further references. This suggest that $\gae$ could have similar numerical applications.

Unfortunately, the definition of $\gae$ in \eqref{eq:gexp} is formal in two ways. On one hand, it is not clear if the limit in the right hand side exists. 
On the other hand,   even if the term in the right hand side were a well-defined function, there is no guarantee that it decays in $|\xi|$ fast enough in order to ensure   that it is  the Fourier transform of a function. 
These issues, together with the lack of a simple procedure to obtain $\gae$ from $\Lambda_\gamma$ |notice that \eqref{eq:gexp} involves oscillatory integrals and a high frequency limit|    has relegated the Born approximation to a marginal place in the numerical reconstruction aspect of the three-dimensional Calderón problem.


Recently,     explicit formulas for the Born approximation of a closely related  inverse problem |in which the elliptic operator in \eqref{id:gamma_BVP} is replaced by a Schrödinger operator $-\Delta +q$| have been obtained in \cite{born_aprox}. 
These formulas  can be adapted to the conductivity inverse problem very easily, and show that the limit \eqref{eq:gexp} exists  when the conductivity is a sufficiently smooth radial function:
\begin{equation}\label{id:radialc}
\gamma(x)=\gamma_0(|x|),\quad \gamma_0\in W^{2,\infty}_{\mathrm{even}}(\R,\R_+). 
\end{equation}
Moreover, they also show existence of the limit  in the non-radial case after an appropriate average. 

Considering radial conductivities provides an important simplification to the problem, since in this case the DtN map is a function of the spherical Laplacian on $\partial B_R$. 
As  a consequence, $\Lambda_\gamma$ is diagonal in the basis of spherical harmonics on $\partial B_R$. In fact, the eigenspaces of $\Lambda_\gamma$ are precisely the spaces of spherical harmonics of a fixed degree $k\in \N_0$.
We denote by $(\lambda_k^R[\gamma])_{k\in\N_0}$    the sequence of eigenvalues of the DtN map $\Lambda_\gamma$ in $\partial B_R$. One can show that (see for instance \cite{born_aprox}), when $\gamma=1$ one has $\lambda_k^R[1] = k/R $  and that the first eigenvalue always vanishes, that is $\lambda_0^R[\gamma] =0$.  From now on we will drop the super-index in the case $R=1$ and use the notation $\lambda_k[\gamma] = \lambda_k^1[\gamma]$.

A consequence of the results in \cite{born_aprox} (see   \Cref{sec:subsec_2_3} for more details) is that for a radial conductivity $\gamma$ as \eqref{id:radialc}, and such that  both $\gamma-1$ and $\partial_{\nu} \gamma $ vanish  at $\partial B_R$,  one has:
\begin{multline} \label{id:gae_formula}
\widehat{(\gae-1)}(\xi) = \\
    -\pi^{d/2}  \sum_{k=1}^\infty  \frac{ (-1)^k}{k! \Gamma(k+d/2)}
  \left(\frac{|\xi|}{2}\right)^{2k-2}  R^{2k+d-1}  \left( \lambda_{k}^R[\gamma] - \frac{k}{R} \right), \quad \xi\in\mathbb{R}^d\setminus 0 ,
\end{multline} 
with $d\ge 3$.

The series on the right hand side in \eqref{id:gae_formula} is absolutely convergent for all $\xi \in \R^d$ since $\lambda_k^R[\gamma] \lesssim k \|\gamma\|_{L^\infty}$, as follows from the   min-max principle applied to the eigenvalues of the DtN map.
However,  \eqref{id:gae_formula} is still formal since 
we do not have \textit{a priori} estimates that control the growth in the $\xi$ variable, necessary to define the inverse Fourier transform of \eqref{id:gae_formula} in a natural functional space. 

On the other hand, we notice that the right hand side of \eqref{id:gae_formula} is well-defined for radial conductivities $\gamma\in L^\infty(B_R,\R_+)$. As a consequence we will drop the regularity requirement \eqref{id:radialc} and assume just that  $ \gamma\in \cG^R$ where 
\[
\cG^R :=\{\gamma\in L^\infty(B_R,\R_+)\,:\, \exists c,\delta >0,\;\gamma\geq c,\,\gamma\in W^{2,\infty}(B_R\setminus B_{R-\delta})\}.
\]
The regularity condition close to the boundary is imposed beacuse the analysis requires that both $\gamma-1$ and $\partial_{\nu} \gamma $ are well defined and vanish  at $\partial B_R$.
 
In fact,   in the  numerical experiments in \Cref{sec:3_gamma}     the right hand side of \eqref{id:gae_formula}
   decays quickly for large $|\xi|$, even for conductivities with jump discontinuities at the interior.
 This is  an interesting fact since in principle the heuristic derivation of \eqref{eq:gexp} requires conductivities with two bounded derivatives \eqref{id:radialc}, and leads us to conjecture that $ \gae$ is indeed always well defined    for conductivities in $\cG^R$.
Notice also that this class is strictly larger than the one for which the uniqueness of the inverse problem is proved. 

We have tested formula \eqref{id:gae_formula} in different situations with high precision values of $\lambda_k^R[\gamma]$ to allow us to explore which features of $\gamma$ can be recovered by $\gae$. All the numerical experiments involve radial conductivities in $B_R\subset \R^3$. In particular we find:

 \begin{enumerate}[label={\Roman*}),wide] 
\item \label{item:ga_1_int} {\bf Local uniqueness from the boundary.}  Assume that $\gamma_1$ and $\gamma_2$ are two different radial continuous conductivities. Then  $\gamma_1(x) = \gamma_2(x)  $   for $|x|>b$ implies that also $\gexp{\gamma_1}(x)=\gexp{\gamma_2}(x) $ for $|x|>b$. 

\item  \label{item:ga_2_int} {\bf Approximation close to the boundary} $\gae$ is in general a better approximation for $\gamma$ in a region close to the boundary of the ball than close to the center of the ball. This is more or less independent of the size of $\gamma$, and only breaks down for very large conductivities.     The fact that $\gae(x)$ is a better approximation of $\gamma(x)$ when $x$ is close to the boundary  has already been observed  in \cite{BKM11}.
 
\item  \label{item:ga_3_int} {\bf Approximation of small conductivities.} As expected, $\gae$ is an excellent approximation for $\gamma$ if $\norm{\gamma-1}_{L^\infty(\R^3)} $ is small, and the quality of the approximations  provided by $\gae$ worsens as this difference  becomes larger.

 \end{enumerate}

While property \ref{item:ga_3_int} is to be expected, properties \ref{item:ga_1_int} and \ref{item:ga_2_int} are truly remarkable, and suggest that $\gae$ contains important information on the conductivity, even when the conductivities are not necessarily very close to $\gamma =1$. The numerical experiments made to study property \ref{item:ga_1_int} have been motivated by  \cite[Theorem 1.4]{daude2} in which a  uniqueness result that has an analogous locality behaviour from the boundary is proved.
As far as we know these properties of the Born approximation do not have similar counterparts in inverse scattering problems.  The particular case $\gamma_2 = 1$ in property \ref{item:ga_1_int} is rigorously shown to hold under the assumption that \eqref{id:radial_born} is a tempered distribution (see   \Cref{sec:appendix_support}). Notice that this case implies the following property:
 \begin{enumerate}[label={\Roman*}),wide] \setcounter{enumi}{3}
\item \label{item:ga_4_int} {\bf Support from the boundary.} If $\gamma(x) -1$ vanishes for $|x|>b$ then $\gae(x)-1$ vanishes for $|x|>b$.
 \end{enumerate}

Another very natural question in this context is   to understand in which sense the Born approximation could recover the leading singularities of $\gamma$. Recovery of singularities results from the Born approximation are well known in Inverse Scattering, as mentioned previously. 
In the two-dimensional Calderón problem a detailed scheme to reconstruct the singularities from bounded conductivities has been given in   \cite{GLSSU2018}, where is also noted that this question is relevant for medical applications of EIT (classifying strokes as ischemic or hemorrhagic, for instance, see references in  \cite{GLSSU2018}).  
In this work  we are not able to capture numerical evidence of recovery of singularities  for $\gae$, though we get strong numerical evidence of this phenomenon when considering the   inverse boundary problem for the Schrödinger operator $-\Delta+q$. The numerical experiments presented in \Cref{sec:3_potential} show clearly that the Born approximation for this problem contains the same jump discontinuities of the original potential, even for very large potentials.


The Born approximation $\gae$ and formula \eqref{id:gae_formula} are useful to gain new insights on the reconstruction aspect of the Calderón problem. For notational simplicity let us illustrate this in the particular case $R=1$. First, note that the Born approximation contains  the same information as the DtN map since one can show that  
\begin{equation} \label{id:lambda_as_moments}
\lambda_{k}[\gamma] -k =  2k(k+d/2-1) \frac{1}{|\IS^{d-1}|}\int_{B} (\gae(x)-1) |x|^{2(k-1)} \, dx  ,
\end{equation}
under the assumption that $\gae$ is well defined\footnote{This is true under the assumption that $\gae \in L^1(B)$ . To prove rigorously a weak version of \eqref{id:lambda_as_moments} one has to show that $|x|^{2k} \gae(x)$ is a tempered distribution for all $k>k_0$ for some $k_0\in \N_0$.}  (see \Cref{sec:subsec_moments}).
This implies that formally   $\gae$ determines $\gamma$, since $\Lambda_\gamma$ determines the conductivity (at least  in the appropriate function spaces where uniqueness holds). 
Thus, from the point of view of reconstruction, properties \ref{item:ga_1_int}--\ref{item:ga_3_int}  imply that $\gae$ is an equivalent, but more useful, way to present the measurement data than the DtN map.




On the other hand, the Born approximation does not provide a way around the bad stability characteristic of the Calderón problem. 
In fact, the identity \eqref{id:lambda_as_moments} implies that finding the Born approximation is essentially equivalent to solving a Hausdorff moment problem, which is also an ill-posed problem with logarithmic stability |see \cite{daude2} 
for more details on the relation between the moment problem and the radial Calderón problem. This instability is reflected in   the numerical experiments: the implementation of  formula \eqref{id:gae_formula}  is not difficult but it  requires very precise values of $\lambda_k[\gamma]-k$.
In this work, the precision that we have used in the computation of the values of $\lambda_k[\gamma]-k$ goes well beyond what one can expect in any real application, since one of the objectives is to gain insights on the general behaviour of $\gae$.

The previous  observations open an intriguing possibility. 
If one can obtain $\gae$ from the DtN map, and $\gamma$ from $\gae$, the reconstruction problem is factorized in a linear part and a non-linear part. 
The linear part is the Hausdorff moment problem that is formally solved by \eqref{id:gae_formula}. 
This raises the question of understanding whether or not the actual stability estimate of the non-linear part is better than logarithmic. An affirmative answer to this question would imply that the bad stability properties of the radial Calderón problem are caused exclusively by the fact that one is implicitly solving a moment problem, and that the Born approximation has already made the hard work of decompressing the information contained in the DtN map.
 In this sense,    numerical observations \ref{item:ga_1_int}--\ref{item:ga_3_int} about $\gae$ seem to give some  evidence, albeit indirect and very limited, that this could be true. This will be discussed with more detail in \Cref{sec:subsec_moments}.

Another interesting issue that is analyzed here is the dependence on $R$ of the quality of the Born approximation of a fixed conductivity that is defined on $\R^d$. 
Assume that $\gamma-1$ is supported in $B$. 
In principle \eqref{id:gae_formula} yields a different Born approximation $\gae(x;R)$ for each value of $R\ge 1$. 
It is possible to find an identity relating the eigenvalues of $(\lambda_k[\gamma])_{k\in\N_0}$ of the DtN map $\Lambda_\gamma$ in $\partial B$ to the eigenvalues of the DtN map in $\partial B_R$ (see \Cref{sec:subsec_scattering}). This yields the following formula
\begin{multline}  \label{id:gae_scattering_0}
\mathcal F_d{(\gae(\centerdot,R)-1)}(\xi) =  
    -\pi^{d/2}  \sum_{k=1}^\infty  \frac{ (-1)^k}{k! \Gamma(k+d/2)}
  \left(\frac{|\xi|}{2}\right)^{2k-2}     \left( \lambda_{k}[\gamma] - k  \right) \times .... \\
  \times  \frac{2k+d-2}{\lambda_k[\gamma] + k +d-2 - R^{-(2k+d-2)}(\lambda_k[\gamma] - k)}  ,
\end{multline} 
which taking the limit $R \to \infty$ becomes
\begin{multline}  \label{id:gae_scattering}
\mathcal F_d{(\gae(\centerdot,\infty)-1)}(\xi) =    \\
    -\pi^{d/2}  \sum_{k=1}^\infty  \frac{ (-1)^k}{k! \Gamma(k+d/2)}
  \left(\frac{|\xi|}{2}\right)^{2k-2}      \left( \lambda_{k}[\gamma] - k \right)   
    \frac{2k+d-2}{\lambda_k[\gamma] + k +d-2}  .
\end{multline} 
This identity shows that one can define a scattering limit $\gae(x;\infty)$ for the Born approximation.  The numerical experiments in \Cref{sec:numerico} suggest that in general ${\gae}(x;R_2)$ is a worse approximation for $\gamma$ than  ${\gae}(x;R_1)$ if $R_2>R_1$, but the deterioration stabilizes very quickly as $R$ grows, as suggested by formula \eqref{id:gae_scattering}. See  \Cref{sec:subsec_scattering} for more consequences of these formulas in the context of the inverse boundary problem for the Schrödinger operator $-\Delta +q$.

Nonetheless, the Born approximation can be used as the basis of a simple but useful algorithm to reconstruct conductivities close to $\gamma=1$ from the DtN map. Since $\gae$ is obtained after linearizing the Calderón problem around $\gamma = 1$, one expects the difference between $\gamma$ and $\gae$ to be of  quadratic order in the norm of $\gamma-1$. This suggests that the following fixed point algorithm can be used to improve the Born approximation when the conductivities are close enough to $\gamma =1$:
\begin{equation} \label{eq:ga_itera_1}
\begin{cases}
\begin{alignedat}{2}
    &\gamma^0 & &=  \; \gae ,  \\
     &\gamma^{n+1} & &= \;  \gae + \gamma^{n} -[\gamma^{n}]_{\mathrm{exp}} , \quad n\geq 0.
     \end{alignedat}
\end{cases}
\end{equation}
We provide numerical evidence on the fast convergence for continuous conductivities that are not far from $1$ (see \Cref{label:algorithm}). 
Similar iterative algorithms have been used in scattering theory, see \cite{BCR16,BCLV18}. 
In the context of the Calderón problem, see \cite{GH22arxiv} for a recent reconstruction algorithm for small conductivities in dimension 2 in which convergence and stability  are   proved.
Note that, in order to compute the new iteration $\gamma^{n+1}$ in \eqref{eq:ga_itera_1}, we have to approximate the Born approximation of $\gamma^n$ and this requires its DtN map. 
Therefore, the efficiency of the algorithm strongly depends on the existence of an accurate and fast algorithm to solve the direct problem, i.e. to compute the eigenvalues of $\Lambda_\gamma$ from $\gamma$. Fortunately, in the case of radial  conductivities such algorithm is available (see \cite{BKM11}).

The rest of the article is divided as follows. In \Cref{sec:2}   we will introduce the Born approximation for the inverse boundary problem for the Schrödinger $-\Delta +q$  and we will analyze the scattering limit for this problem. We then show how to derive \eqref{id:gae_formula} and  study the relation of the Born approximation with the  inverse Hausdorff   moment problem. In \Cref{sec:numerico}  we describe some implementation details to obtain the Born approximation and include   the numerical experiments    for the potential and for the conductivity problems. Finally in  \Cref{label:algorithm} we   provide numerical evidence on the   convergence of the numerical algorithm \eqref{eq:ga_itera_1} and an analogous version of the algorithm for the  Schrödinger operator $-\Delta +q$. The \Cref{sec:appendix_support} provides a short proof of property \ref{item:ga_4_int} under the assumption that the Born approximation is  well defined as a tempered distribution.

\subsection*{Acknowledgments} 
This research has been supported by Grant \mbox{MTM2017-85934-C3-3-P} of Agencia Estatal de Investigación (Spain). The authors would like to thank Thierry Daudé and François Nicoleau for very insightful discussions on the radial Calderon problem.


\section{Two closely related inverse  problems} \label{sec:2}

In this section we introduce the Born approximation for the inverse boundary problem associated to the Schrödinger operator $-\Delta +q$ in  \Cref{sec:subsec_2_1}, and we analyze the scattering limit for this problem in \Cref{sec:subsec_scattering}. We then show how to derive \eqref{id:gae_formula}     in \Cref{sec:subsec_2_3}. Finally in  \Cref{sec:subsec_moments} we study the relation of the Born approximation with the  inverse Hausdorff   moment problem, and its implications for the numerical reconstruction in the Calderón problem.

\subsection{The Born approximation in the inverse Schrödinger  boundary value problem} \label{sec:subsec_2_1}

In \cite{SU87}, Sylvester and Uhlmann showed in particular that $\Lambda_\gamma$ determines $\gamma$ for $C^2(B_R)$ conductivities in $d\ge 3$ by reducing the problem to an inverse boundary value problem for the Schrödinger operator $-\Delta +q$. We next recall the main ingredients of their strategy. For $\gamma\in W^{2,\infty}(B_R,\R_+)$ with $\gamma =1$ at the boundary, let 
\begin{equation}\label{id:qgamma}
q=\frac{\Delta \sqrt{\gamma}}{\sqrt{\gamma}} \quad \text{and} \quad v=\sqrt{\gamma}u.
\end{equation}
Then
 \begin{equation}  \label{id:calderon_q} 
\left\{
\begin{array}{rll}
-\Delta v + qv  &= 0  &\text{in }  B_R,\\
v   &= f   &\text{on }   \partial B_R.  \\
\end{array}\right.
\end{equation}
if and only if $u$ satisfies \eqref{id:gamma_BVP}.

Consider the family 
\begin{equation} \label{id:Qdef}
\cQ_d^R:=\{q\in L^\infty(B_R,\R) \,:\, 0\not\in \Spec_{H^1_0(B_R)}(-\Delta+q)\}. 
\end{equation}
Note that $\cQ_d^R$ contains all the $q\in L^\infty(B_R,\R)$ that are obtained from some $\gamma\in W^{2,\infty}(B_R,\R_+)$ via \eqref{id:qgamma}. 
Again, the DtN map  $\Lambda[q]$ associates to a function $f\in H^{1/2}(\partial B_R)$ the normal derivative $\Lambda[q] (f):=\partial_\nu v|_{\partial B_R} \in   H^{-1/2}(\partial B_R)$, where $v$ is  the unique solution $v\in H^1(B_R)$ of \eqref{id:calderon_q}.
In particular, for $q$ as in \eqref{id:qgamma} one has that 
\begin{equation} \label{id:dn_maps}
\Lambda  [q] f = \gamma^{-1/2}  \left( \Lambda_\gamma  +\frac{1}{2}    \partial_\nu\gamma \right) \gamma^{-1/2} f, \quad \text{on } \, \partial B_R ,
\end{equation}
and hence $\Lambda[q] = \Lambda_\gamma$ whenever $(\gamma-1)\in W^{2,\infty}_0(B_R)$. 

These observations show that the uniqueness and reconstruction questions for $\Lambda_\gamma$ are reduced to the unique determination and reconstruction of a potential $q\in L^\infty(B_R,\R)$ from the Dirichlet to Neumann map  $\Lambda[q]$ associated to the Schrödinger operator $-\Delta+q$ on $B_R$.

Assume now that $q(x) = q_0(|x|)$ is a radial function such that $q\in\cQ_d^R$, not necessarily arising from a conductivity via \eqref{id:qgamma}. For such $q$, the DtN map is diagonal in the spherical harmonics.
From now on we denote the eigenvalues of $\Lambda[q]$ at $\partial B_R$ by  $(\lambda_k^R[q])_{k\in \N_0}$,  and $(\lambda_k[q])_{k\in \N_0}$ in the case $R=1$.

In this inverse problem the Fourier transform of the Born approximation can be defined as the limit
\begin{equation}\label{id:q_exp_def}
    \widehat{\qe}(\xi) = \lim_{|\zeta| \to \infty} \int_{\partial B_R} e^{-i x\cdot (\xi + \zeta)} (\Lambda[q] - \Lambda_0) e^{ix\cdot \zeta} , \quad \xi\in \mathbb{R}^d,
\end{equation}
where $\zeta \in\mathcal V_\xi$, as in the case of \eqref{eq:gexp}, see \cite{born_aprox} for more details and references  on the origin and motivation of this definition.

Recently, in \cite[Theorem 1]{born_aprox} the authors introduced an explicit formula for the Born approximation $\qe$ in the radial case when the domain is the unit ball $B$:
\begin{equation}  \label{id:radial_born}   
    \widehat{\qe}(\xi)
    = 2 \pi^{d/2}  \sum_{k=0}^\infty  \frac{ (-1)^k}{k! \Gamma(k+d/2)}
    \left(\frac{|\xi|}{2}\right)^{2k}   (\lambda_k[q] - k) .
\end{equation}
An analogous but more complex formula is also  proved for the non-radial case, see \cite[Theorem 3]{born_aprox}.

As in the case of the conductivity, we remark that \eqref{id:radial_born}   is  formal.  If $q$ is supported in $B_\alpha \subseteq B$, it   follows from \cite[Theorem 2]{born_aprox} that  
\begin{equation} \label{est:moment_aprox_radial}
    | \lambda_k[q] -k|  \lesssim   \|q\|_{L^\infty(B)}\frac{\alpha^{2k}}{2k+d},
\end{equation}
for $k > \alpha \norm{q}_{L^\infty(B)}^{1/2} - \frac{d-2}{2}$ . 
This estimate implies that  the series in \eqref{id:radial_born} is absolutely convergent for all $\xi \in \R^d$, but there is not an a priori control of the growth in the $\xi$ variable. This is a subtle problem that will be discussed again in \Cref{sec:subsec_moments}.

Formula \eqref{id:radial_born} provides an easy way to approximate the Born approximation numerically. As expected, the numerical results indicate that it shares many properties with the conductivity case:
  \begin{enumerate}[label={\Roman*.b}),wide]
  
    \item  \label{item:q_1} For bounded potentials, the Born approximation $\qe$ is well defined as an inverse Fourier transform of \eqref{id:radial_born} since $\widehat{q}(\xi)$ decays to 0 as $|\xi| \to \infty$,
    provided that the potential $q= q_+ +q_-$ has a  not very large negative part $q_-$. It is not clear if the Born approximation   is well defined for very large and negative potentials, since in the numerical experiments \eqref{id:radial_born} becomes very large and separates from $\widehat{q}(\xi)$ as $|\xi| \to \infty$. 
    
     \item  \label{item:q_3} $\qe$ is in general a better approximation for $q$ in a region close to the boundary of the ball than close to the center of the ball.  
 
    \item   \label{item:q_2} As in the case of the conductivity, if $q_1$ and $q_2$ are two different potentials and $q_1(x) = q_2(x)  $   for $|x|>b$, it follows  that $\gexp{q_1}(x)=\gexp{q_2}(x) $ for $|x|>b$.

    \item  \label{item:q_4} The Born approximation recovers the jump  discontinuities of $q$. 
      This is known as recovery of singularities and it is well established in other scattering problems, as mentioned in the introduction.
 
    \item  \label{item:q_5} In most of the numerical examples, given a potential $q$ and a parameter $t\in (0,\infty)$, the Born approximation of $tq$ develops a characteristic oscillatory behaviour close to $x=0$ as $t$ grows. 

\end{enumerate}

In analogy with the iterative algorithm \eqref{eq:ga_itera_1}, we propose the following algorithm  to improve the Born approximation for the Schrödinger inverse boundary problem: 
\begin{equation} \label{eq:itera_q}
\begin{cases}
\begin{alignedat}{2}
    &q^0 & &=  \;\qexp{q} ,  \\
     &q^{n+1} & &= \;  \qexp{q} + q^{n} -\qexp{q^{n}}, \quad n\geq 0 . 
     \end{alignedat}
\end{cases}
\end{equation}
As we  show in the numerical experiments below, this algorithm approximates very fast potentials that are not large in the $L^\infty$ norm.

\subsection{The scattering limit} \label{sec:subsec_scattering} \

It is possible to extend the Born approximation \eqref{id:radial_born} to a formula that is valid for any  ball of radius $R>0$.
Let $q\in \bigcap_{R\ge 1} \cQ^R_d$. We denote momentarily  the DtN map of $q$ in  $\partial B_R$ by $\Lambda^R[q]$.  For convenience,
 we will omit the $R=1$ superscript when $R=1$   as in the case of the eigenvalues.
Also, we will denote $\qe$ in $B_R$ as $\qe(x;R)$ and its Fourier transform by $\widehat{\qe}(\xi;R)$.  
 The following identity for the Born approximation of a potential in $B_R$ follows from a straightforward change of variables in formula \eqref{id:radial_born}:
\begin{equation}  \label{id:qexp_formula_R_1}
\widehat{\qe}(\xi;R)
 =  2 \pi^{d/2}  \sum_{k=0}^\infty  \frac{ (-1)^k}{k! \Gamma(k+d/2)}
  \left(\frac{|\xi|}{2}\right)^{2k}  R^{2k} R^{d-1} \left (\lambda_k^R[q] - \frac{k}{R} \right ) .
\end{equation} 

 In fact, the change of variables $y=x/R$, $q_R(y)= R^2 q(Ry)$ transform the initial problem \eqref{id:calderon_q} into the corresponding one for the unit ball $B = B_1$ with potential $q_R$. Also, with this change of variables \eqref{id:q_exp_def} implies that $\qe(Ry;R) =[q_R]_{\mathrm{\mathrm{exp}}}(y;1)$. This proves \eqref{id:qexp_formula_R_1}.
Formula \eqref{id:gae_formula} for the conductivity   follows easily by combining the formula \eqref{id:qexp_formula_R_1} with the linearization of \eqref{id:qgamma}, as we will   show in \Cref{sec:subsec_2_3}. 

 An interesting question is how the Born approximation of a fixed potential might depend on $R$. That is, we want to compare the different Born approximations $ {\qe}(x;R)$ for the same potential $q$.
We start with the following simple lemma.
\begin{lemma} \label{lemma:eigenvalues}
Let $d\ge 2$, $R\geq 1$, and 
 $q = q_0(|\cdot|)$, where $q_0 \in L^\infty(\R_+,\R)$ is supported in $(0,1]$. Assume that $q \in \cQ_d^R$ for all $R\ge 1$. Then the eigenvalues $(\lambda_k[q])_{k\in\N_0}$ of $\Lambda[q]$ in the unit ball determine the eigenvalues $(\lambda_k^R[q])_{k\in\N_0}$ for all $R\ge 1$:
\begin{equation} \label{id:R_eigenvalues}
\lambda_k^R[q] - \frac{k}{R} = R^{-(2k+d-1)} (\lambda_k[q] - k) \frac{2k+d-2}{\lambda_k[q] + k +d-2 - R^{-(2k+d-2)}(\lambda_k[q] - k)} .
\end{equation}
\end{lemma}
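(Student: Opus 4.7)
The argument is a separation of variables computation. Since $q$ is radial and $q\in\cQ_d^R$ for every $R\geq 1$, the DtN map $\Lambda^R[q]$ is diagonal in the basis of spherical harmonics, and for each degree $k\in\N_0$ the problem on a $Y_k$-mode reduces to a second order radial ODE
\[
-v_k''(r) - \frac{d-1}{r} v_k'(r) + \frac{k(k+d-2)}{r^2} v_k(r) + q_0(r) v_k(r) = 0, \qquad r\in(0,R),
\]
with $v_k$ regular at $r=0$. The key point is that, because $\supp q_0\subseteq (0,1]$, the equation is \emph{exact} on the annulus $r\in[1,R]$, where its two linearly independent solutions are $r^k$ and $r^{-(k+d-2)}$.

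First I would fix $k$ and write $v_k(r)=a\,r^k + b\,r^{-(k+d-2)}$ on $[1,R]$ for some $a,b\in\R$ depending on $k$. On $[0,1]$, let $\phi_k$ denote the (unique up to scalar) solution that is regular at the origin; by the definition of the DtN map on the unit ball, the eigenvalue $\lambda_k[q]$ equals the logarithmic derivative of $\phi_k$ at $r=1$. Matching value and derivative of $v_k$ at $r=1$ with the interior solution $\phi_k$ (and eliminating the unknown matching constant) yields the linear relation
\[
(\lambda_k[q]-k)\,a \; + \; (\lambda_k[q]+k+d-2)\,b \; = \; 0,
\]
so that $b/a = -(\lambda_k[q]-k)/(\lambda_k[q]+k+d-2)$. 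The hypothesis $q\in\cQ_d^R$ guarantees that this ratio is well defined and, later, that $v_k(R)\neq 0$.

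Next I would evaluate $\lambda_k^R[q] = v_k'(R)/v_k(R)$ using the explicit form $v_k(r)=ar^k + br^{-(k+d-2)}$, subtract $k/R$, and simplify. A short algebraic manipulation collapses the numerator to a single term proportional to $(2k+d-2)(b/a)R^{-(2k+d-1)}$, while the denominator becomes $1+(b/a)R^{-(2k+d-2)}$. Substituting the value of $b/a$ obtained above and clearing the factor $\lambda_k[q]+k+d-2$ from numerator and denominator gives exactly
\[
\lambda_k^R[q]-\frac{k}{R} \;=\; R^{-(2k+d-1)}(\lambda_k[q]-k)\,\frac{2k+d-2}{\lambda_k[q]+k+d-2 - R^{-(2k+d-2)}(\lambda_k[q]-k)},
\]
which is \eqref{id:R_eigenvalues}.

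There is no genuine difficulty in the argument; the only point requiring a small amount of care is to argue that the matching step and the final division by $v_k(R)$ are legitimate, which follows from the assumption $q\in\cQ_d^R$ for all $R\geq 1$ (so that $0$ is not a Dirichlet eigenvalue of $-\Delta+q$ on any $B_R$, forcing $\phi_k(1)\neq 0$ and $v_k(R)\neq 0$ for every $k$). The rest is a routine but slightly tedious algebraic simplification.
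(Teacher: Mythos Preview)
Your proposal is correct and follows essentially the same route as the paper: separation of variables on the $k$-th spherical harmonic, the observation that on the annulus $[1,R]$ the radial solution is $a r^k + b r^{-(k+d-2)}$, and matching at $r=1$ to express everything in terms of $\lambda_k[q]$. The only cosmetic difference is that the paper imposes the normalization $u_k(R)=1$ and solves a $2\times 2$ linear system for the coefficients, whereas you work directly with the ratio $b/a$; both lead to the same short algebraic simplification and the stated formula.
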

\begin{proof}
  Let $Y_k \in C^\infty( \IS^{d-1})$ be a spherical harmonic of order $k$. Let $v_k$ be the solution of \eqref{id:calderon_q} with $f(x) = Y_k(x/|x|)$. Since the potential is radial, by separation of variables one has that $v_k(x)  = Y_k(x/|x|) u_k(|x|)$, where $u_k$ is a solution of
 \begin{equation}   \label{id:radial_b_problem}
\begin{cases}
\displaystyle{- \frac{1}{r^{d-1}} \frac{d\phantom{r}}{dr} \left( r^{d-1} \frac{d \phantom{r}}{dr}u_k \right) + \left(q_0(r) + \frac{1}{r^2}k(k+d-2)\right)u_k(r)}  = 0  \quad \text{in } (0,R),\\ \vspace{1mm}
u_k(0) < \infty,\quad u_k(R) =1   .     
\end{cases}
\end{equation} 
Note that for every $1\leq R'\leq R$ one has
\[
\Lambda^{R'}[q] (Y_k) = \lambda_k^{R'}[q] Y_k =\frac{u_k'(R')}{u_k(R')} Y_k\,\implies\, \lambda_k^{R'}[q] = \frac{u_k'(R')}{u_k(R')}.
\]
Therefore, if $R\geq 1$, since $q_0$ is supported in $(0,1]$ we have that $u_k(r)$ is a solution of the free equation in $(1,R)$. It follows that
\[
u_k(r) = Ar^{k} + Br^{-(k+d-2)}, \quad \text{in } (1,R) \; \text{ with } \; AR^k +BR^{-(k+d-2)} =1 .
\]
Since the restriction of $u_k$ to $[0,1]$ is also a solution of \eqref{id:radial_b_problem} with $R=1$, by the previous discussion we have that
\[
\lambda_k[q] = \frac{u_k'(1)}{u_k(1)} =  \frac{Ak - B(k+d-2)}{A + B}.
\]
Hence we have the linear system
\[
\begin{cases}
Ak - B(k+d-2) = \lambda_k [q](A+B), \\ 
 AR^k +BR^{-(k+d-2)} =1  ,
\end{cases}
\]
which can  easily be solved to find $A$ and $B$ in terms of $\lambda_k[q]$, $k$, and $R$. Then, plugging the solutions into the identity
\[\lambda_k^R[q] = u_k'(R)= kAR^{k-1} -(k+d-2) BR^{-(k+d-1)} ,\]
yields formula \eqref{id:radial_b_problem} after some computations. 
\end{proof}

As a consequence of this lemma, if $d \ge 3$, we can write \eqref{id:qexp_formula_R_1} in terms of the eigenvalues  $(\lambda_k[q])_{k\in\N_0}$ of $\Lambda[q]$ in the unit ball:
 \begin{multline} \label{id:qexp_formula_R_2}
\widehat{\qe}(\xi;R) =  \\
   2 \pi^{d/2}  \sum_{k=0}^\infty  \frac{ (-1)^k}{k! \Gamma(k+d/2)}
  \left(\frac{|\xi|}{2}\right)^{2k}   \left (\lambda_k[q] - k \right ) \frac{2k+d-2}{\lambda_k[q] + k +d-2 - R^{-(2k+d-2)}(\lambda_k[q] - k)}  .
\end{multline} 
Therefore the  limit  $R\to \infty$ of the previous expression is well defined:
\begin{equation} \label{id:qexp_formula_R_3}
\widehat{\qe}(\xi;\infty)
 =  2 \pi^{d/2}  \sum_{k=0}^\infty  \frac{ (-1)^k}{k! \Gamma(k+d/2)}
  \left(\frac{|\xi|}{2}\right)^{2k}   \left (\lambda_k[q] - k \right )\frac{2k+d-2}{\lambda_k[q] + k +d-2} .
\end{equation} 
This can be understood as a scattering limit for the Born approximation.  Formulas \eqref{id:gae_scattering_0}  and \eqref{id:gae_scattering} follow easily from the previous identities, as we will show in \Cref{sec:subsec_2_3}.

It is well known that the inverse boundary value problem for the Schrödinger operator is closely related to the fixed energy scattering problem as $R\to \infty$, see \cite{Uh92}. Therefore, the fact that $\qe(x;\infty) $ is well defined   opens the possibility of obtaining a Born approximation in terms of the fixed angle scattering data related to $\qe(x;\infty)$ and formula \eqref{id:qexp_formula_R_1} as $R \to \infty$.

Notice that the fact that $\widehat{\qe}(\xi;R)$  is not linear on $\lambda_k[q]$  is a consequence of the non-linear identity \eqref{id:R_eigenvalues} relating  $\lambda_k^R[q]$ and  $\lambda_k[q]$. After linearizing \eqref{id:qexp_formula_R_2} and \eqref{id:qexp_formula_R_3} with respect the eigenvalues $\lambda_k[q]$, one recovers exactly \eqref{id:radial_born}. 
As in the case of the conductivity, for a fixed potential  $q\in \bigcap_{R\ge 1} \cQ^R_d$  we will later show numerically that $\qe(x;R)$ becomes a worse approximation for $q$ as $R$ grows.
This suggest that it is always a better strategy to obtain $\lambda_k[q]$ from $\lambda_k^R[q]$ with \Cref{lemma:eigenvalues} and use  \eqref{id:radial_born} rather than using directly \eqref{id:qexp_formula_R_1}.


\subsection{The formula for \texorpdfstring{$\gae$}{ \textgamma exp}} \label{sec:subsec_2_3} \

Formula \eqref{id:gae_formula} follows from  linearizing  \eqref{id:calderon_q}.
First, we assume that $\gamma-1\in W^{2,\infty}_0(B_R,\R_+)$. By \eqref{id:dn_maps} this implies that   
\begin{equation} \label{id:ga_q_dnmap}
\Lambda_\gamma=\Lambda [q] ,
\end{equation}
where $q = \gamma^{-1/2}\Delta \gamma^{1/2}$. The map  $\gamma \mapsto q$ is a non-linear map  between $W^{2,\infty}(B_R)$ and $L^\infty(B_R)$.
It is not difficult to verify that the Fréchet derivative of this map at $\gamma =1$, is given by the operator $\frac12 \Delta $. Thus
\[
q =  \frac12 \Delta (\gamma -1) + o(\norm{\gamma-1}_{W^{2,\infty}(B_R)}).
\]
Since $\gamma-1$ and $q$ have compact support, we can extend both functions to $\R^d$ by zero and take the Fourier transform of the previous expression. This yields
\[
\widehat{(\gamma-1)}(\xi) = -2 \frac{\widehat{q} (\xi)}{|\xi|^2} + R(\xi),
\]
where $R$ is some remainder term.
Combining the linearization $\qe$ of the Schrödinger problem with the linearization 
\[
\widehat{(\gamma-1)}(\xi) \sim -2 \frac{\widehat{q} (\xi)}{|\xi|^2} ,
\]
gives   the following definition for $\gae$:
\begin{equation} \label{id:exp_rel}
 \widehat{(\gae-1)}(\xi) := -2 \frac{\widehat{\qe} (\xi)}{|\xi|^2} ,
\end{equation}
which is how one obtains   \eqref{eq:gexp}  from  \eqref{id:q_exp_def}.

  This is essentially the same argument given in \cite{BKM11} to motivate the definition of $\gae$. 
Notice also that now  \eqref{id:gae_formula} follows directly  from    and \eqref{id:qexp_formula_R_1}. In the same way, formulas \eqref{id:gae_scattering_0}  and \eqref{id:gae_scattering}  also follow immediately from  \eqref{id:qexp_formula_R_2} and \eqref{id:qexp_formula_R_3}.

Since $\gae$ is obtained from $\qe$, in principle one requires the conductivity to have two bounded derivatives, but nothing prevents us to use \eqref{id:gae_formula} for less regular conductivities as mentioned in the introduction. In fact, one important advantage of \eqref{id:gae_formula} is that it does not require to go through the Schrödinger problem: it only involves the DtN map $\Lambda_\gamma$.

 \subsection{Connection with the Hausdorff moment problem}   \label{sec:subsec_moments} \
 
 In this section we show that formula \eqref{id:radial_born}  implies that the linearization of the radial  Calderón problem is essentially a Hausdorff moment problem, a connection observed in \cite{daude2} and \cite{born_aprox}. To see this, let $f(x) = f_0(|x|)$ be any radial $L^1(\R^d)$ function with compact support. Define the moments
\begin{equation} \label{id:moments}
  \sigma_k[f] = \frac{1}{|\SS^{d-1}|}\int_{\R^d} f(x) |x|^{2k} \, dx = \int_0^\infty f_0(r) r^{2k+d-1} \, dr.  
\end{equation}
Then, it holds that
\begin{equation} \label{id:series}
\widehat{f}(\xi)
 =  2 \pi^{d/2}  \sum_{k=0}^\infty    \frac{ (-1)^k}{k! \Gamma(k+d/2)}
  \left(\frac{|\xi|}{2}\right)^{2k}  \sigma_k[f] ,
\end{equation}
where the series on the right hand side converges absolutely for radial compactly supported $f \in L^1(\R^d)$ |see \cite[Section 3]{born_aprox} for a short proof of this fact\footnote{Notice that the compact support of $f$ is essential. It is not difficult to find counterexamples to this formula in the Schwartz class.}. 
This means that formula \eqref {id:series} is a formal solution for the following Hausdorff  moment problem: given a sequence of numbers $( \mu_k)_{k\in \N_0}$, find a compactly supported and radial function $f$ such that \eqref{id:moments} holds with $\sigma_k[f] =\mu_k$. 

The similarity between formulas \eqref{id:radial_born} and \eqref {id:series}  implies that the linearization of the radial Calderón problem is equivalent to the previous Hausdorff moment   problem. 
 In fact, formally one can think of $\qe$ as the radial and compactly supported function such that $\sigma_k[\qe] = \lambda_k[q] -k$, provided that such a function exists.

On the other hand, if $q$ is supported in $B_\alpha \subseteq B$, it   follows from \cite[Theorem 2]{born_aprox} that  
\begin{equation} \label{est:moment_aprox_radial_2}
    \left| \lambda_k[q] -k  - \sigma_k[q]  \right |\lesssim   \|q\|^2_{L^\infty(B)}\frac{\alpha^{2k}}{(2k+d-2)^3},
\end{equation}
for $k > \alpha \norm{q}_{L^\infty(B)}^{1/2} - \frac{d-2}{2}$ . 
  An important consequence of this estimate is that the linear map $q\mapsto\sigma_k[q]$ is the Fréchet derivative of  the non-linear functional $q\mapsto\lambda_k[q]$ on  $L^\infty(B)$ at $q=0$.  Thus, estimate \eqref{est:moment_aprox_radial_2} offers a different way to understand the connection between the radial Calderón problem and the Hausdorff moment problem.
  This has also been observed in a more general setting in \cite{daude2}.

The same connection can be established in the case of the conductivity problem, as we saw in the introduction. To see this, we now justify  identity \eqref{id:lambda_as_moments}.
First, it is convenient to introduce the following notation. Let $\mathcal L_d: \ell^{\infty}(\N_0) \times \R^d \to C^\infty(\R^d)$  be the following operator:
\begin{equation*}
   \mathcal L_d \left((\mu_k)_{k\in \N_0} ;\xi\right)   
 = 2 \pi^{d/2}  \sum_{k=0}^\infty  \frac{ (-1)^k}{k! \Gamma(k+d/2)}
  \left(\frac{|\xi|}{2}\right)^{2k}  \mu_k ,
\end{equation*}
where $(\mu_k)_{k\in \N_0}$ is   a sequence of reals numbers belonging to $\ell^\infty(\N_0)$. With this notation we can respectively write \eqref{id:radial_born}   and \eqref{id:series} as  
\begin{equation*}
    \widehat{\qe}(\xi) =  \mathcal L_d \left((\lambda_k[q]-k)_{k\in \N_0} ;\xi\right) \quad \text{and} \quad  \widehat{f}(\xi) =  \mathcal L_d \left((\sigma_k[f])_{k\in \N_0} ;\xi\right).
\end{equation*}
 The first identity together with \eqref{id:ga_q_dnmap} and \eqref{id:exp_rel} imply that
\begin{multline} \label{id:alternative_gaexp}
   \widehat{(1-\gae)}(\xi) = -2\frac{ \mathcal L_d \left((\lambda_k[\gamma]-k)_{k\in \N_0} ;\xi\right) }{|\xi|^2} \\
 =\mathcal L_d \left( \left (\frac{1}{2(k+1)(k+d/2)} (\lambda_{k+1}[\gamma]-k) \right)_{k\in \N_0} ;\xi\right)   ,
\end{multline}
 which is a completely equivalent way to write \eqref{id:gae_formula} in the case $R=1$. From the previous expression identity \eqref{id:lambda_as_moments} follows directly, provided that $\gae$ belongs to an appropriate functional space. 

We have already mentioned that the definition of $\qe$ and $\gae$ is formal, since we do not control the growth of the right hand sides   of \eqref{id:gae_formula} and \eqref{id:radial_born}. This question now becomes equivalent to the non-trivial matter of determining if $(\lambda_k[q]-k)_{k\in\N_0}$  is the moment sequence corresponding to a    function (or  distribution), and analogously for the conductivity. As we have already mentioned, the numerical experiments in this work suggest that this is the case for conductivities in $\mathcal G^R$, and for potentials satisfying certain conditions |see property \ref{item:q_1}.
 

 As mentioned previously, the problem of recovering a conductivity from the DtN map in the Calderón problem presents a number of numerical challenges. The Born approximations $\qe$ and $\gae$ offer a simpler setting in which study these problems, by giving an approximation to $q$ and $\gamma$, respectively, that can be computed easily from the spectrum of the DtN map, and which contains useful information on the conductivity or the potential. Nonetheless, the stability challenges that appear in the reconstruction problem  also appear to compute  the Born approximation as can be expected from the connection with the Hausdorff moment problem.
 
The Hausdorff moment   problem is a notoriously ill-posed inverse problem, see for example \cite{moment_theory_book}. In fact, it   is related to the inversion of the Laplace transform, which is also an ill-posed problem (to see this, use the change of variables $r=e^{-t}$ in the last integral of \eqref{id:moments}). Analogously to the Calderón problem, the   Laplace transform and the forward map in the Hausdorff moment problem are injective under suitable conditions,   but the inverse mappings are not continuous. This affects stability, which can only be achieved in compact subspaces of potentials or conductivities. These are called conditional stability estimates, see \cite{KRS21} for more details  (see also \cite{STY01}, for conditional stability estimates for the inverse  Laplace transform that are analogous to the usual stability estimates of the Calderón problem). In the Hausdorff moment problem one can expect also to have logarithmic stability estimates. One (rough) way to look at this is through \eqref{est:moment_aprox_radial}: since the moments  $\sigma_k[q]$ and $\lambda_k[q] -k$ have the same size, the arguments of \cite{Mand00} and \cite{KRS21} should imply that the maps  $(\lambda_k[q]-k)_{k\in\N_0} \mapsto q$   and  $  (\sigma_k[q])_{k\in\N_0} \mapsto q$ have similar restrictions on possible conditional stability estimates.

As mentioned in the introduction,  it is natural to  ask if the hard and unstable part of decompressing the information from the DtN map is already been achieved by the formulas \eqref{id:gae_formula} and \eqref{id:radial_born}. This would be reflected in better stability estimates for the map $\qe \mapsto q$, or for the map    $(\lambda_k[q]-k)_{k\in\N_0} \mapsto (\sigma_k[q])_{k\in\N_0}$. The fact that the Born approximation in numerical experiments captures the high frequency part of the potential together with the  values  of $q$ close to the boundary,   offers some indirect evidence that this could be true. Also, the Born approximation approximates very well small potentials (say,  $\norm{q}_{L^\infty(B)} \le 2$) which also suggests  better stability  for $\qe \mapsto q$ with a smallness condition. From the point of view of the moments and the eigenvalues, \eqref{est:moment_aprox_radial_2}   shows that the largest differences between them appear for low values of $k$, which also could imply   stability estimates better than logarithmic for the map    $(\lambda_k[q]-k)_{k\in\N_0} \mapsto (\sigma_k[q])_{k\in\N_0}$. The same can be considered for the conductivity problem. All these questions remain open.

The instability of the moment problem is reflected in this work in the need of high precision computation of the values of $\lambda_k[q]-k$ in order to obtain accurate reconstruction of the high frequency part of $\qe$ and $\gae$. 
 In general to reconstruct $\qe$ and $\gae$  with \eqref{id:radial_born} and \eqref{id:gae_formula} up to frequencies  $|\xi| \le N$ one has to sum the series up to $k\sim 2N$. Thus, by \eqref{est:moment_aprox_radial}, the relevant data  is of order $\alpha^{2k}$ with $k \sim 2N$ if $q$ or $\gamma-1$ are supported in $B_\alpha\subset B$ (see more details in \Cref{sec:numerico}). 
 As a consequence, recovery of the high frequency parts of $\qe$ and $\gae$ with formulas \eqref{id:gae_formula} and \eqref{id:radial_born} using real data is a hard problem  (note, however, that this difficulty is also present in the full reconstruction problem).  On the other hand, the advantage of defining  $\qe$ as the function  that satisfies $\sigma_k[\qe] =  \lambda_k[q]-k$ is that one does not  need to apply  necessarily \eqref{id:radial_born}, but use instead  other numerical approaches to the moment problem.

As a consequence of the previous discussion, it is natural to expect that regularization techniques (see \cite{moment_theory_book}) commonly used to deal with noisy and real experiment data will be   necessary to obtain $\qe$ and $\gae$.
 The need of regularization techniques can be seen in \eqref{id:gae_formula} and \eqref{id:radial_born}: a small perturbation of one of the eigenvalues adds a derivative of a $\delta$  distribution  to $\gae$ and $\qe$.


\section{Numerical computation of the Born approximation} \label{sec:numerico}

In this section we describe some implementation details to obtain the Born approximation and include numerical experiments both for the potential and conductivity problems when $d=3$. 

\subsection{Computation of the DtN map}

Let $R=1$. The Born approximation formulas \eqref{id:gae_formula}  and \eqref{id:radial_born} and the reconstruction algorithms \eqref{eq:ga_itera_1} and \eqref{eq:itera_q} require very accurate values of   $\lambda_k[\gamma]-k$ in the case of the conductivity,   and $\lambda_k[q]-k$ in the case of the potential.
In the second case, we compute these eigenvalues using the recursive algorithm described in \cite{Fagueye} for piece-wise constant radial potentials. In the case of the conductivity we use the algorithm given in \cite{BKM11}, again for piece-wise constant conductivities. 

Due to the continuity of the DtN map with respect to the potential in the $L^\infty$ norm, we can approximate the DtN map of any continuous function from the one associated to a sufficiently close (in the $L^\infty$ norm) piece-wise constant function. 
We just take a uniform partition and approximate the conductivity (or the potential) by the function that takes, at each subinterval, the value at the middle point. 
In the examples below we have considered approximations with a uniform partition of up to 10,000 subintervals in $r\in[0,1]$. 

Another important fact is the computer precision. 
As we describe below we consider $400$ eigenvalues in our experiments.
However, since $\lambda_k[q]-k$ decays exponentially in $k$,  accurate approximations of these eigenvalues require more than the standard Float64 arithmetic precision. 
In our experiments we have considered up to Float1024 precision, according to the specific case.

\subsection{Implementation details in the Born approximation}

To compute the Born approximation we use formulas (\ref{id:radial_born}) for the potential $q$ and (\ref{id:gae_formula}) for the conductivity problem. 
This requires a discrete inversion formula for the Fourier transform of radial functions, and an accurate approximation of the corresponding series. We discuss both issues below.

The inverse Fourier transform of a radial function can be approximated with the one-dimensional discrete inverse  Fourier transform.  
In fact, if $f:\mathbb{R}^3 \to \mathbb{R}$ is a radial function and $f(x) = f_0(|x|)$, the Fourier transform is given by
\begin{eqnarray*}
\widehat{f} (\xi)&=& \mathcal F_3(f)(\xi) = \int_{\mathbb{R}^3 } f(x) e^{-ix\cdot \xi} dx \\
&=&\frac{(2\pi)^{3/2}}{|\xi|^{1/2}} \int_0^\infty r^{3/2}f_0(r)\frac{\sqrt{2}\sin(|\xi|r)}{\sqrt{\pi r|\xi|}}  \; dr
= \frac{4\pi}{|\xi|} \int_0^\infty r\; f_0(r)\sin(|\xi|r)  \; dr.
\end{eqnarray*}
Let $h: \R \to \R$ be the odd extension of $rf_0(r)$ to $\R$, that is $h(r) = rf_0(|r|)$.
Then
\[
  \mathcal F_3(f)(\xi) =\frac{2\pi}{|\xi|} \int_{-\infty}^\infty r\; f(|r|)\sin(|\xi|r)  \; dr=-\frac{2\pi}{|\xi|} \Im \left [\mathcal F_1 (h)(|\xi|) \right],
\]
where $\mathcal F_1$   and $\mathcal F_3$ stand, respectively,  the one dimensional and the three dimensional Fourier transforms.

Analogously, one can write the three dimensional inverse Fourier transform $\mathcal F_3^{-1}$ in terms of the one dimensional inverse Fourier transform $\mathcal F_1^{-1}$ as follows. Since $\widehat{f}$ is radial, we have that $\widehat{f}(\xi) = g_0(|\xi|)$ for some function $g_0:(0,\infty) \to \R$. Thus
\[
 f(x)=\mathcal F_3^{-1}   [\widehat{f}   ] (x)= -\frac{1}{(2\pi)^2 |x|} \Im \left[  \mathcal F_1^{-1} (h)(|x|) \right].
\]
where     $h: \R \to \R$ is again the   extension    $h(\rho) = \rho g_0(|\rho|)$.

In practice, we approximate a sampling of the function $f$ from a suitable sampling of its Fourier transform using the discrete inverse Fourier transform. To improve resolution we recover the extension of $q$ by zero to the interval $[0,10R]$, although we only draw the restriction to $[0,R]$ in the experiments below. 
Thus, a uniform sampling of $q$ in this interval with $N+1$ values is considered: $\{f(r_j)\}_{j=0}^N$ with $r_j=h j,$ $j=0,...,N$, $h=10R/N$. This requires a uniform sampling of $\hat f(\xi) $ in the interval $[0,\pi/(Rh)]$ with the same number of points, i.e. $\{ \hat f(\xi_j)\}_{j=0}^N$ with $\xi_j=h_\xi j,$ $j=0,...,N$, $h_\xi =\pi/(10R)$. 
As described in \cite[Lemma3]{BCR16} the convergence of this sampling to $f$ can be estimated in terms of the regularity of $f$ and its support, since an aliasing contribution appears for non-compactly supported functions in $B_R$. In our case, we are not able to establish precise error estimates, since we lack precise estimates on smoothness and support of the Born approximation.   

The second important issue is the approximation of the series, for example (\ref{id:radial_born}) in the case of the potential $q$. 
Obviously, we only sum a finite number of terms. 
As described in \cite{born_aprox} for any given $\xi$ the main contribution of this series is in the first terms $k\sim 2|\xi|$. 
We observed that for $\xi\in[0,160]$  the first $400$ terms in the series produce an stable approximation in the sense that adding more terms produces contributions of the order of $10^{-16}$ to the Fourier transform in this interval. 

On the other hand, note that the series in (\ref{id:radial_born}) contains very large terms for which the usual computer precision Float64 is not sufficiently good. 
In fact, this sum can only be computed accurately for $\xi\in[0,30]$ with the standard precision. To obtain $\xi\in[0,160]$ we considered Float1024 precision in our experiments. 

The codes are programmed with Julia which is very efficient with arbitrary precision computations. They can be downloaded from \cite{BCMM-P}.

\subsection{Numerical experiments: The conductivity case} \label{sec:3_gamma} \

  In this section we focus on the conductivity problem. We illustrate the main properties \ref{item:ga_1_int}--\ref{item:ga_4_int}  of the Born approximation with different experiments. In the first experiment we show that $\gae$ is defined even in the presence of jump discontinuities.  The second experiment illustrates the local uniqueness from the boundary, that is, property \ref{item:ga_1_int} in the introduction. Experiment 3 concerns the scattering limit of the Born approximation given by formula \eqref{id:gae_scattering}. Finally in experiment 4 we consider the Born approximation of different smooth conductivities: a conductivity close to $\gamma =1$, a very large conductivity and one degenerated example (a conductivity with reaches the value $0$ at the origin). 

\begin{figure}
    \centering
    \begin{tabular}{cc}
    \includegraphics[width=7cm]{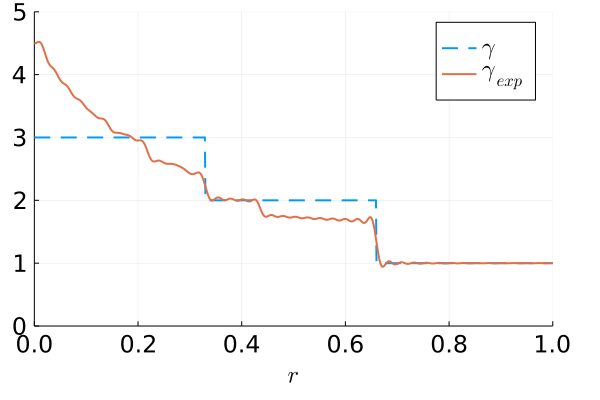} &
    \includegraphics[width=7cm]{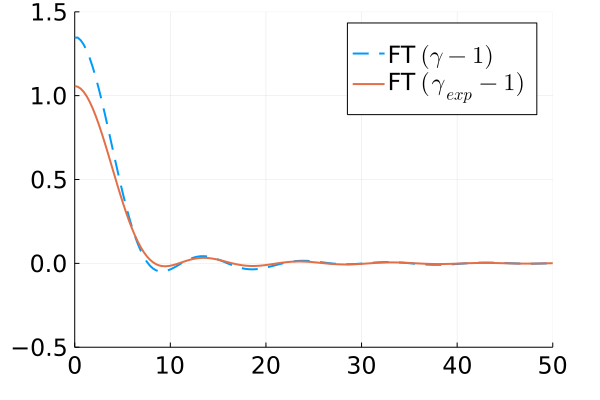} \\
    \includegraphics[width=7cm]{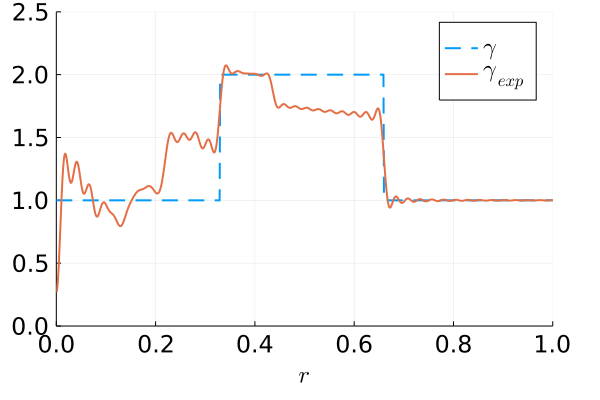} &
    \includegraphics[width=7cm]{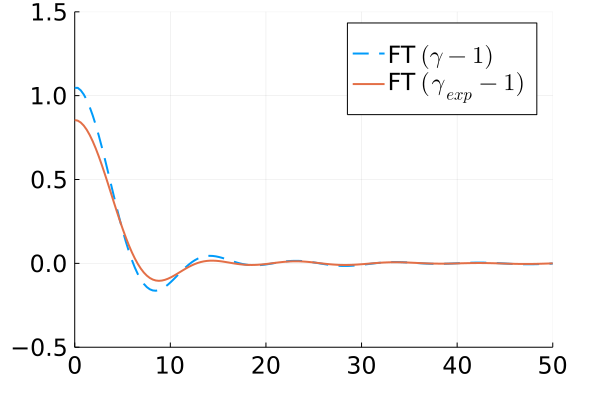}
    \end{tabular}
    \caption{Experiment 1: A step conductivity and its   Born approximation (left), and a comparison of their Fourier transforms (right). Lower simulations correspond to  a conductivity closer to the reference conductivity $\gamma=1$.}
    \label{fig:ex7}
\end{figure}

{\bf Experiment 1.} We first consider two piece-wise constant conductivities (see \Cref{fig:ex7}). This shows  that the Born approximation is well defined even for conductivities with jump discontinuities. The lower simulation provides a better approximation of the potential since it is closer to the    reference conductivity $\gamma=1$, which illustrates property \ref{item:ga_3_int} in the introduction. Notice also that  property   \ref{item:ga_4_int} is verified in both examples.

 {\bf Experiment 2.}  Here we illustrate the remarkable property   \ref{item:ga_1_int} of the Born approximation. In particular we observe that the Born approximations of $3$ different conductivities that coincide in an interval $(b,1)$, also  coincide    in $(b,1)$ (see \Cref{fig:ex_2}).  Notice that the Born approximations are much smoother than in the previous case, since there are no jump discontinuities in the conductivities.  This experiment also shows that the Born approximation is better close to the boundary |property \ref{item:ga_2_int}| and is a better approximation for conductivities closer to $\gamma=1$ |property \ref{item:ga_3_int}.  
 
 \begin{figure}
    \centering
    \begin{tabular}{cc}
    \includegraphics[width=7cm]{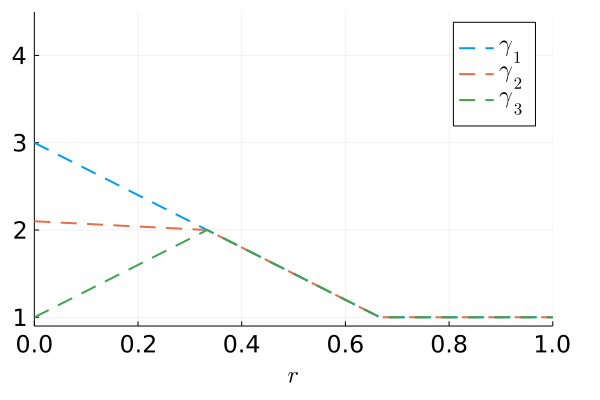} &
    \includegraphics[width=7cm]{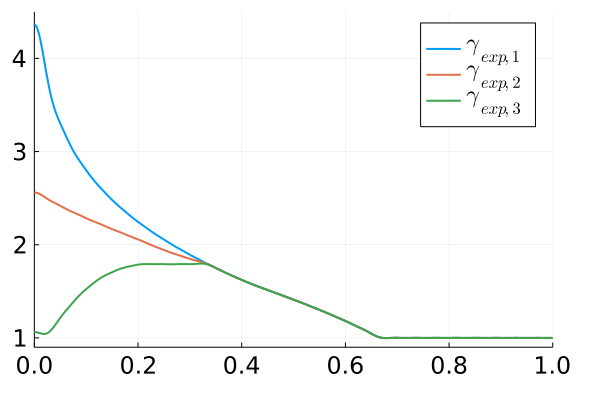} 
    \end{tabular}
    \caption{Experiment 2: Three different conductivities that coincide in the interval $(1/3,1)$ (left) and their Born approximations (right). We observe that they also coincide in this same interval $(1/3,1)$.}
    \label{fig:ex_2}
\end{figure}

{\bf Experiment 3.} 
 Here we  compare the Born approximation $\gae $ given by   \eqref{id:gae_formula} with $R =1$, and the scattering limit $\gae(\centerdot,\infty)$ given by formula \eqref{id:gae_scattering} (see \Cref{fig:ex12c}). We take 
\[
\gamma(r) = 2-\mathrm{sign(r-1/2)}(r-1/2).
\]
 We see that the scattering limit of the Born approximation deteriorates slightly with respect to $\gae$ but still recovers a fairly good approximation, in particular close to the boundary (in fact both  $\gae$ and $\gae(\centerdot,\infty)$  seem to satisfy property  \ref{item:ga_2_int}).

\begin{figure}
    \centering
    \begin{tabular}{cc}
    \includegraphics[width=7cm]{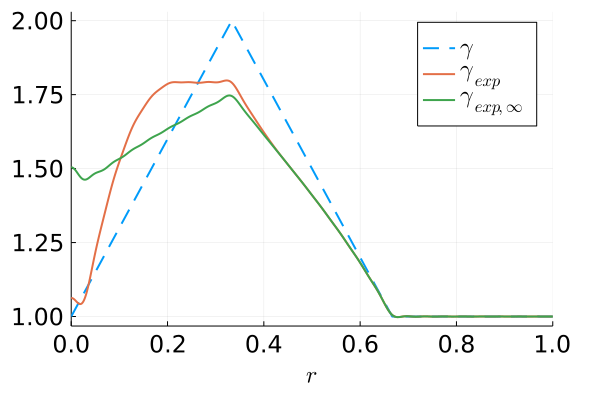} &
    \includegraphics[width=7cm]{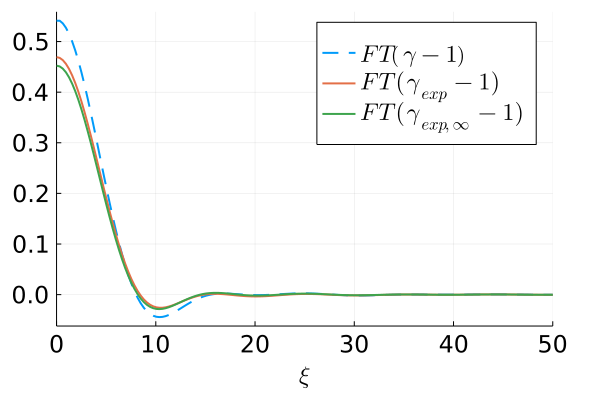} 
    \end{tabular}
    \caption{Experiment 3: Scattering limit. A conductivity   $ \gamma$, its  Born approximation $\gae $ given by   \eqref{id:gae_formula} with $R =1$, and the scattering limit $\gae(\centerdot,\infty)$ given by formula \eqref{id:gae_scattering} (left).}
    \label{fig:ex12c}
\end{figure}

{\bf Experiment 4.} 

Here we consider the Born approximation for smooth conductivities.
The simulations in the first row in \Cref{fig:ex10} correspond to the example considered in \cite{BKM11} where the Fourier transform was computed only for $|\xi| < 30$ due to numerical instabilities. Here we have circumvented these instabilities with higher machine precision. 
As illustrated by the second row in \Cref{fig:ex10}, the Born approximation deteriorates as $\gamma$ becomes very large, which is something to be expected since $\gae$ is constructed from a linearization at $\gamma =1$.
The singular case illustrated in the third row of \Cref{fig:ex10} corresponds to a conductivity which is close to zero near $r=0$. Observe that we still have a good approximation close to the boundary, but near the origin the Born approximation degenerates to negative values.

\begin{figure}
    \centering
    \begin{tabular}{cc}
    \includegraphics[width=7cm]{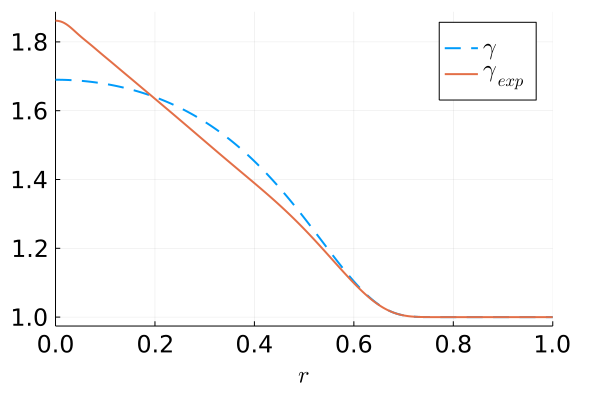} &
    \includegraphics[width=7cm]{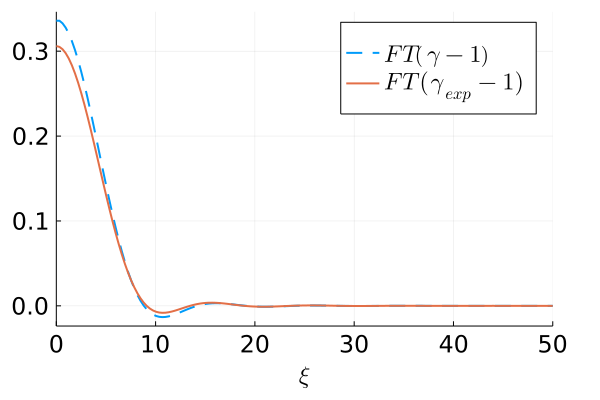} \\
    \includegraphics[width=7cm]{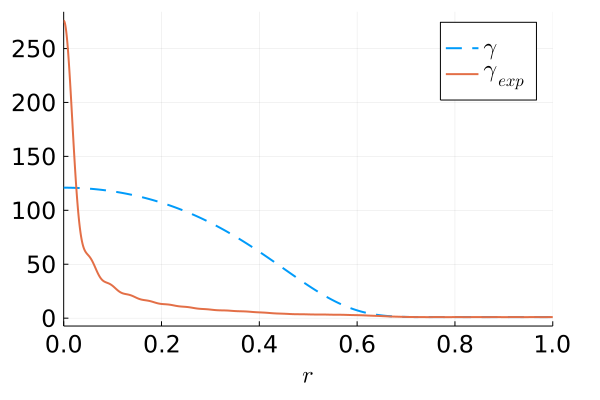} &
    \includegraphics[width=7cm]{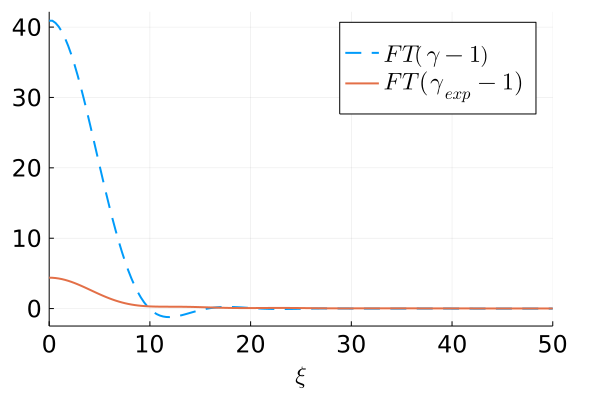} \\
    \includegraphics[width=7cm]{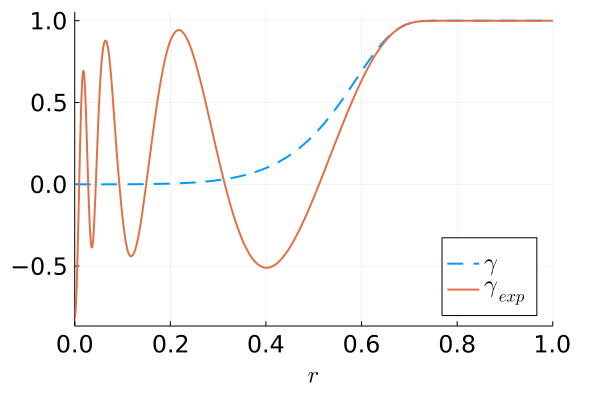} &
    \includegraphics[width=7cm]{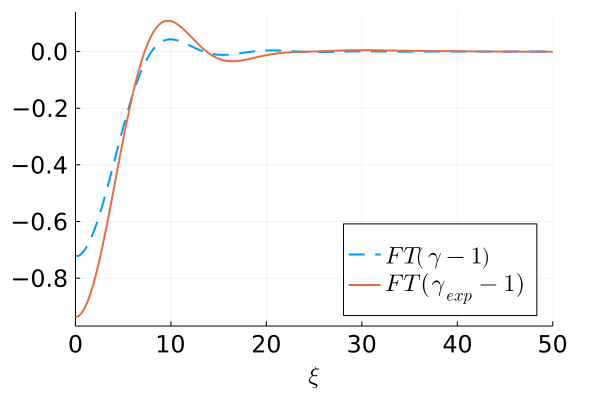} \\
    \end{tabular}
    \caption{Experiment 4: Born approximations (left) and their Fourier transform (right) for smooth conductivities with different sizes.}
    \label{fig:ex10}
\end{figure}

\bigskip

\subsection{Numerical experiments: The potential case} \label{sec:3_potential} 

 In this section we focus on the potential case and again analyze the Born approximation through  different experiments. The aim is to illustrate properties  \ref{item:q_1}-- \ref{item:q_5} of \Cref{sec:2} and other similar phenomena.  Experiments 5  and 6 illustrate the  Born approximation for potentials of different sizes and regularities. Experiment 7 explores the depth dependence of the accuracy of the Born approximation by considering the approximation of a large set of randomly chosen potentials. In this way we can make a quantification of property \ref{item:q_3}.
In Experiment 8 below, we show that, when the potential is zero in a sufficiently large neighborhood of $r=0$, the    Born approximation is able to recover this value. Experiment 9 analyzes how the Born approximation of a fixed potential changes for different values of $R$ and in the scattering limit $R\to \infty$.
Finally, in Experiment 10 we illustrate a particular oscillatory behavior associated to large negative potentials.

\bigskip

{\bf Experiment 5.} Here we consider piece-wise constant radial potentials with two steps in the unit ball $B$. We first compute the eigenvalues of the DtN map $\{\lambda_k[q]\}_{k\in\N_0}$ as described previously. From this sequence we have obtained  the Born approximation according to formula \eqref{id:radial_born}.    In \Cref{fig:ex1} and \Cref{fig:ex1b} we compare the Born approximation and the potential, and their Fourier transforms.

In the first row of  \Cref{fig:ex1} and the first two rows of \Cref{fig:ex1b} we consider step potentials with $\norm{q}_{L^\infty}\le 10$. In these examples the Born approximation is a much better approximation close to the boundary of the ball than close to the origin, which illustrates property \ref{item:q_3}. This property is also satisfied in the remaining cases, though   the approximation deteriorates when the size of the potential increases. In particular, we observe that oscillations appear close to $r=0$ |property \ref{item:q_5}| for very large potentials, see the last rows of \Cref{fig:ex1,fig:ex1b}. 

We observe also a very clear recovery of singularities phenomenon |property \ref{item:q_4}. All the discontinuities of the step potentials are also present in the respective Born approximations, even for very large potentials.
This is also appreciated in the Fourier transforms: large differences between the Fourier transform and the potentials are produced only for low frequencies. 
\begin{figure}
    \centering
    \begin{tabular}{cc}
    \includegraphics[width=7cm]{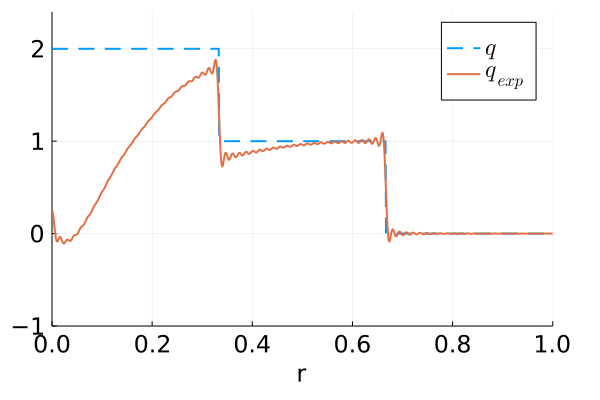} &
    \includegraphics[width=7cm]{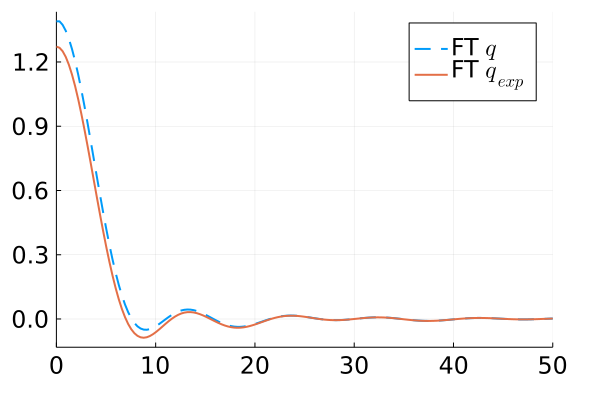} \\
    \includegraphics[width=7cm]{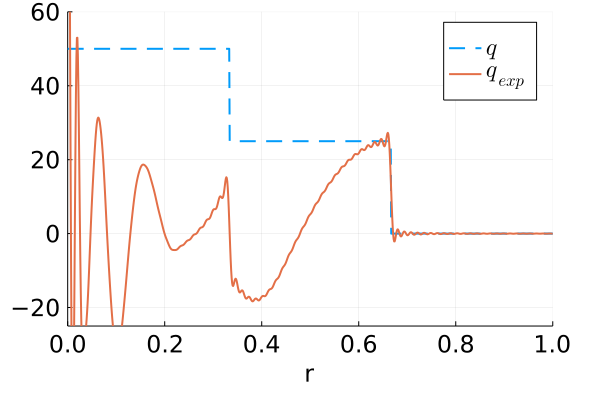} &
    \includegraphics[width=7cm]{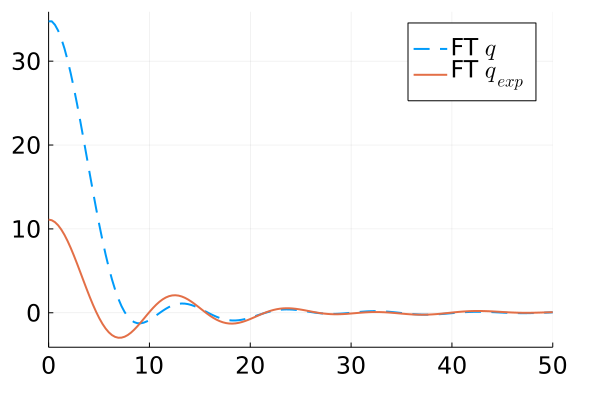} \\
    \includegraphics[width=7cm]{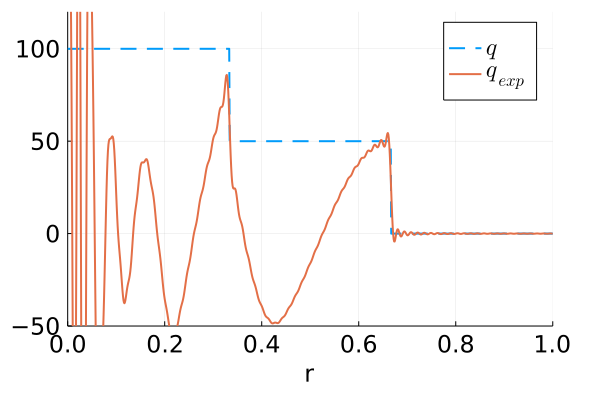} &
    \includegraphics[width=7cm]{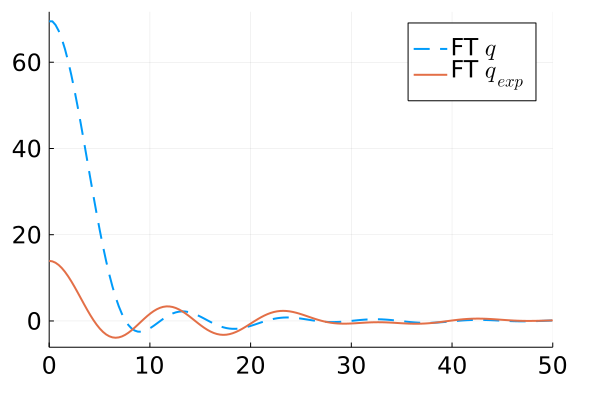} \\
    Potential & Fourier transforms
    \end{tabular}
    \caption{Experiment 5: Born approximation of  increasingly larger step potentials (left) and its Fourier transforms (right).}
    \label{fig:ex1}
\end{figure}
In the particular  case of \Cref{fig:ex1b}, we consider a bump type potential that vanishes in a neighborhood of $r=0$. This fact seems to  improve the behaviour of the Born approximation close to $r=0$ for medium and small size potentials. This phenomena will  also be illustrated for a smooth potential (see experiment 8 below).
\begin{figure}
    \centering
    \begin{tabular}{cc}
    \includegraphics[width=7cm]{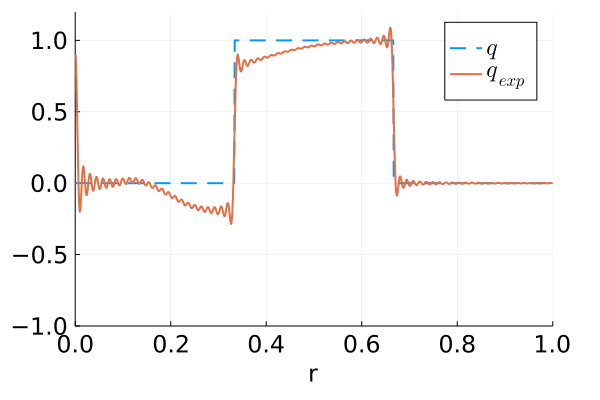} &
    \includegraphics[width=7cm]{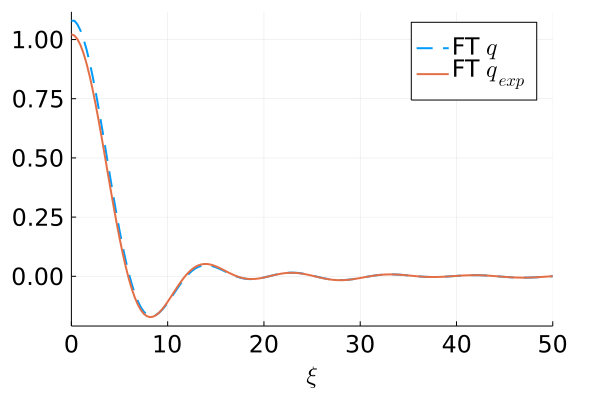} \\
    \includegraphics[width=7cm]{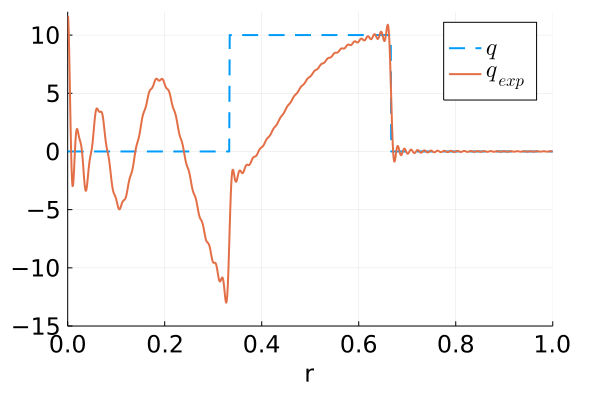} &
    \includegraphics[width=7cm]{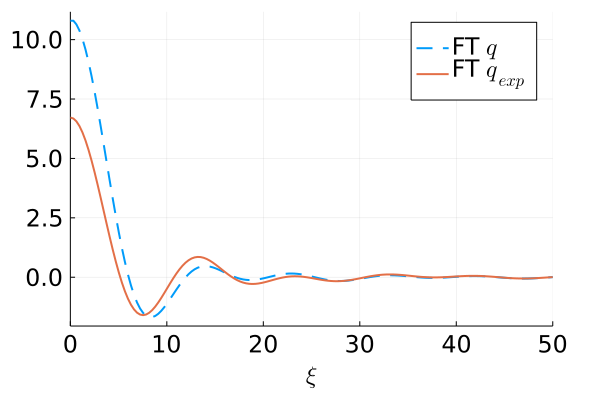} \\
    \includegraphics[width=7cm]{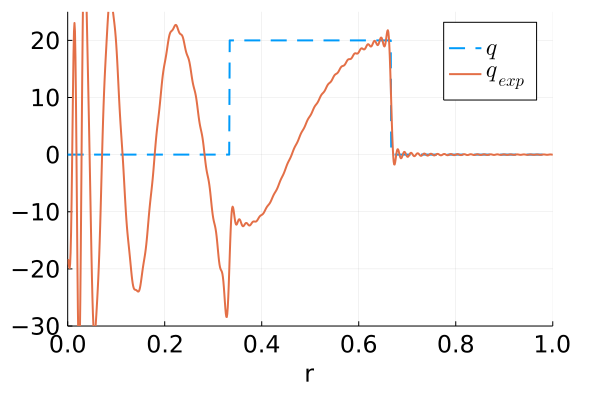} &
    \includegraphics[width=7cm]{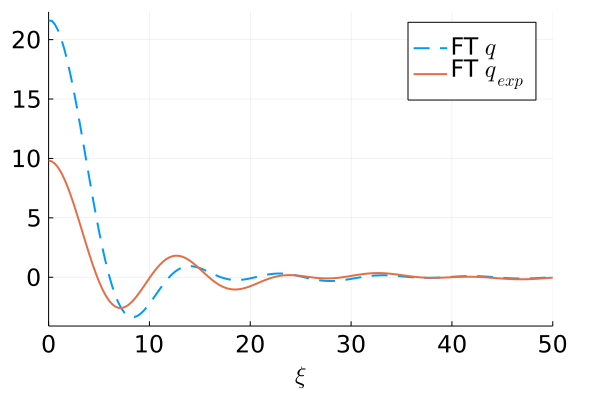} \\
    Potential & Fourier transforms
    \end{tabular}
    \caption{Experiment 5: Born approximation of a discontinuous bump potential and its Fourier transform}
    \label{fig:ex1b}
\end{figure}

\bigskip

{\bf Experiment 6.} This example (see \Cref{fig:ex_7}) is similar but now we consider a smooth function. In this case we do not have a explicit formula for the eigenvalues of the DtN map and we use an approximation with a piecewise constant function as described above. Again, the Born approximation is fairly good  for a small potential (first row) and  for a medium size potential it is a better approximation  closer to the boundary  (middle row) than close to the origin, which illustrates property \ref{item:q_3} in the smooth case. 
Again, for a large potential  the low frequencies separate substantially at low frequencies (last row, right)   and oscillations appear near $r=0$.

\begin{figure}
    \centering
    \begin{tabular}{cc}
    \includegraphics[width=7cm]{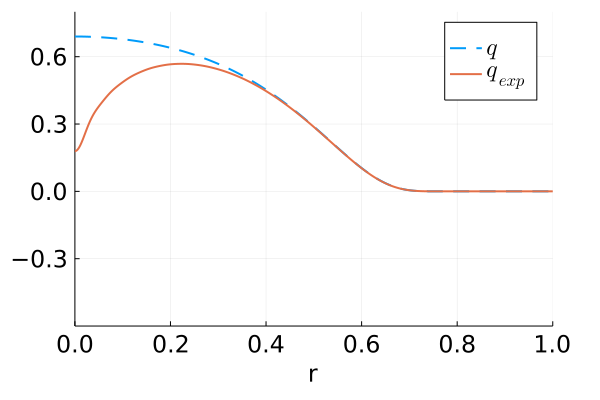} &
    \includegraphics[width=7cm]{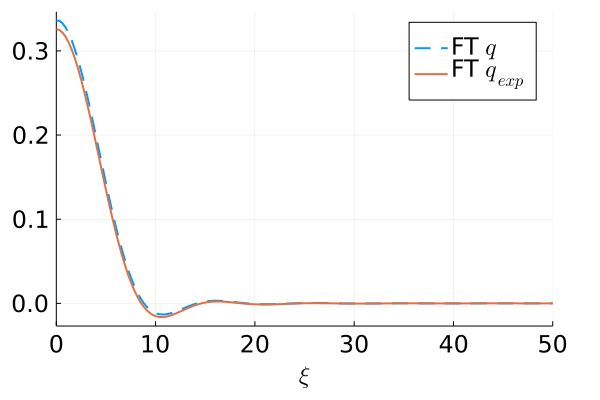} \\
    \includegraphics[width=7cm]{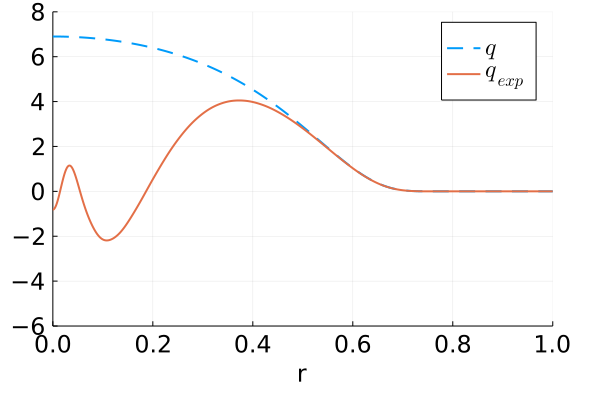} &
    \includegraphics[width=7cm]{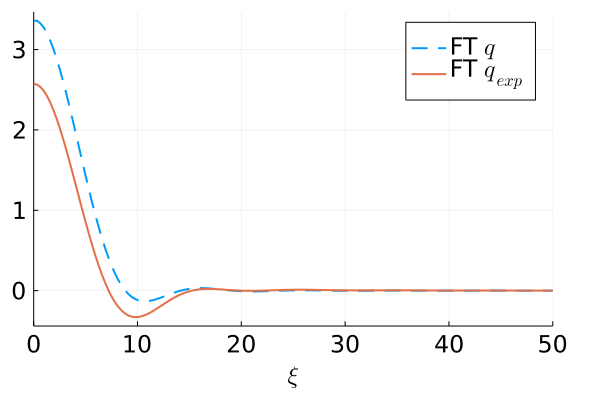} \\
    \includegraphics[width=7cm]{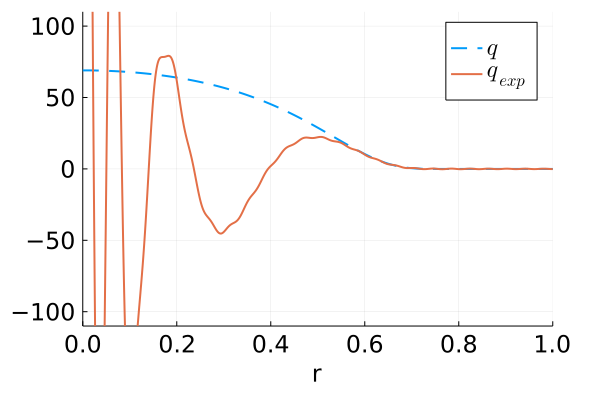} &
    \includegraphics[width=7cm]{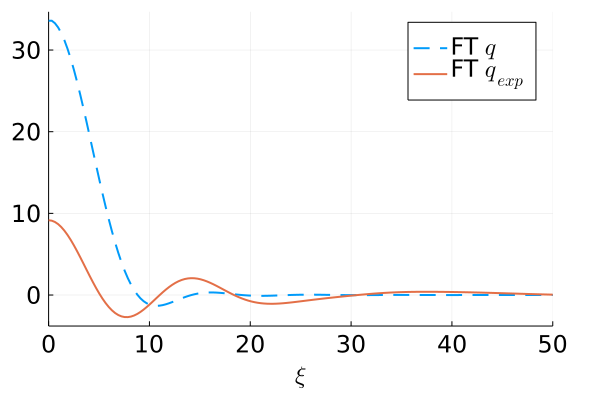} \\
    Potential & Fourier transforms
    \end{tabular}
    \caption{Experiment 6: Born approximation of a smooth potential and its Fourier transform}
    \label{fig:ex_7}
\end{figure}

\bigskip

{\bf Experiment 7.} From the previous two examples we see that the Born approximation is more accurate close to the boundary. This is a general issue as illustrated in the next example (see \Cref{fig:ex_8}) where we have computed the error $e(r)=\frac1{N_s}\sum_{i=1}^{N_s}| q(r)-\qe (r)|$ for a random sampling of $N_s=100$ potentials. We have chosen the potentials as linear combination of the first 20 trigonometric basis functions $\{ \sqrt{2} \cos(\pi(j-1/2)r)\}_{j\geq 1}$ which are orthonormal in $L^2$ and satisfy the boundary conditions $\varphi'(0)=\varphi(1)=0$, i.e.
\[
q(x)=\sum_{j=1}^{20} c_j \sqrt{2} \cos(\pi(j-1/2)r).
\]
The Fourier coefficients $c_j$ are chosen randomly in the interval $c_j\in [-1/j,1/j]$ in such a way that
$\| q\|_{L^2}^2 = \sum_{j=1}^{20} |c_j|^2\leq 1$. We observe that the error $e(r)$ decrease as $r\to 1$. We also illustrate the behavior of this error for larger potentials of the form $q^\alpha(r)=\alpha q(r)$, $\alpha=2,3$. In this case, the relative error $e_\alpha=e/\alpha$, where $e(r)=\frac1{N_s}\sum_{i=1}^{N_s}| q^\alpha(r)-[q^\alpha]_{\mathrm{exp}}(r)|$, is close to zero near $r=1$ but becomes larger as $r$ approaches to $r=0$.   This is essentially a quantification of property \ref{item:q_3}.
\begin{figure}
    \centering
    \includegraphics[width=7cm]{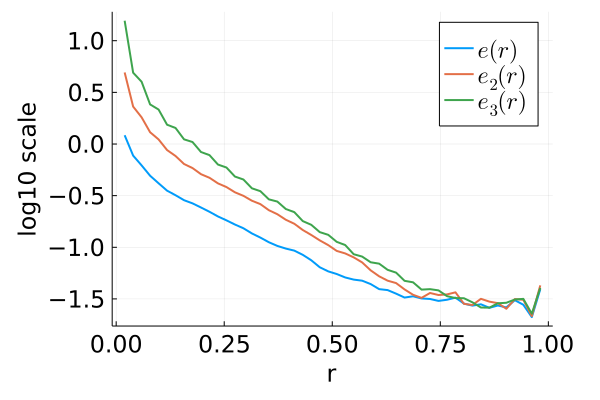} 
    \caption{Experiment 7: average error distribution of the Born approximation, $e_\alpha(x)$, computed with 100 samples, where Fourier coefficients are chosen randomly in different intervals}
    \label{fig:ex_8}
\end{figure}

\bigskip

{\bf Experiment 8.} An interesting feature appears when the potential is zero near $r=0$. In this case, the Born approximation is somehow able of recovering this value as illustrated in the example in \Cref{fig:ex4} (see also \Cref{fig:ex1b}), at least if the potential is not too large. This seems to be very specific of the zero value since the error is in general much larger when the potential is not zero near $r=0$, as illustrated in the previous experiment. 

\begin{figure}
    \centering
    \begin{tabular}{cc}
    \includegraphics[width=7cm]{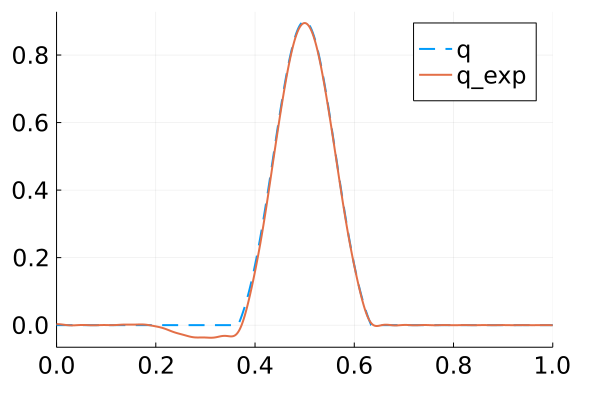} &
    \includegraphics[width=7cm]{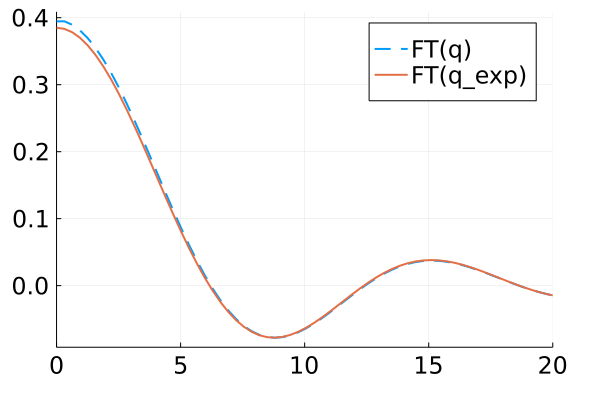}
    \end{tabular}
    \caption{Experiment 8: Born approximation of a potential which is zero in a neighborhood of $r=0$ (left) and its Fourier transform (right).}
    \label{fig:ex4}
\end{figure}

\bigskip

{\bf Experiment 9.} In this example we consider the same potential in larger domains, i.e. we take as domain $B_R$ for $R=5,\infty$ (the case $R=1$ is the same as in \Cref{fig:ex_7}). We observe in \Cref{fig:ex5} that the Born approximation, given by \eqref{id:qexp_formula_R_2}, deteriorates but maintains a good approximation near $r=1$. The case $R=\infty$ corresponds to the scattering limit given by formula (\ref{id:qexp_formula_R_3}). 

\begin{figure}
    \centering
    \begin{tabular}{cc}
    \includegraphics[width=7cm]{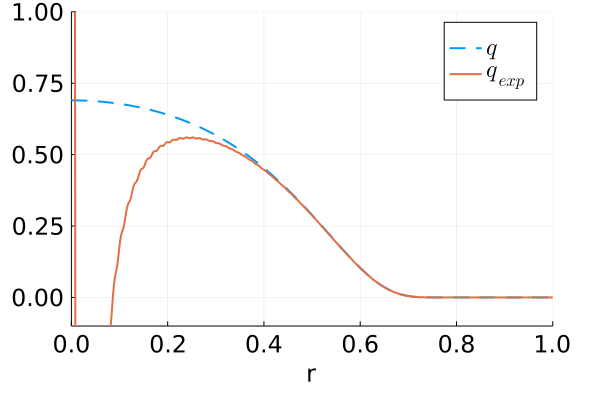} &
    \includegraphics[width=7cm]{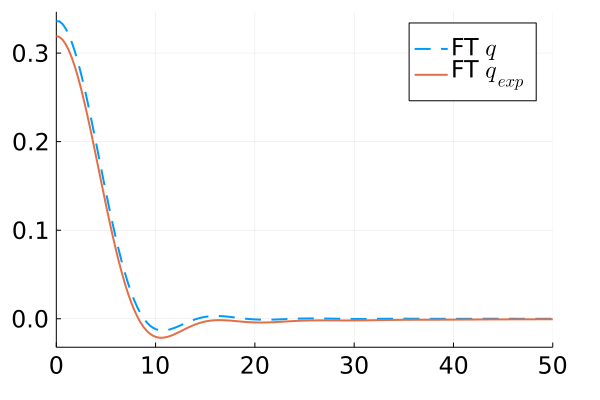}\\
        \includegraphics[width=7cm]{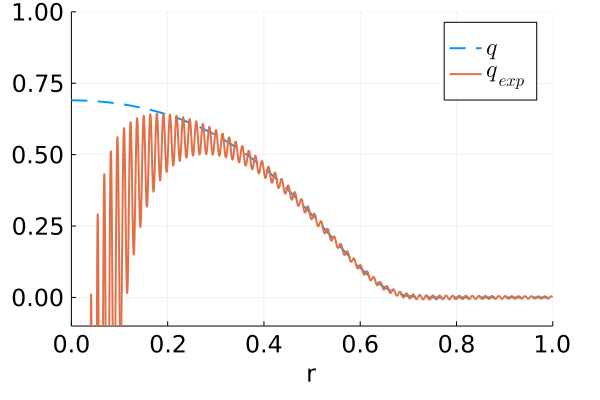} &
    \includegraphics[width=7cm]{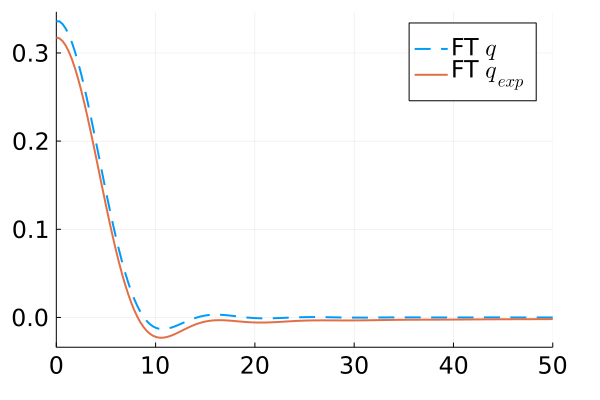}
    \end{tabular}
    \caption{Experiment 9: Born approximation of a potential  (left) and its Fourier transform (right) when we consider domains with $R=5$ (upper figure), $R=\infty$ (bottom one).}
    \label{fig:ex5}
\end{figure}

\bigskip

{\bf Experiment 10.} When the potential is negative the Born approximation seems to be less stable as shown in the experiment illustrated in \Cref{fig:ex6}. As we increase the size of the potential Fourier transform of the Born approximation and the potential separate. This produces both oscillations and a singularity at $r=0$ in the Born approximation.  

\begin{figure}
    \centering
    \begin{tabular}{cc}
    \includegraphics[width=7cm]{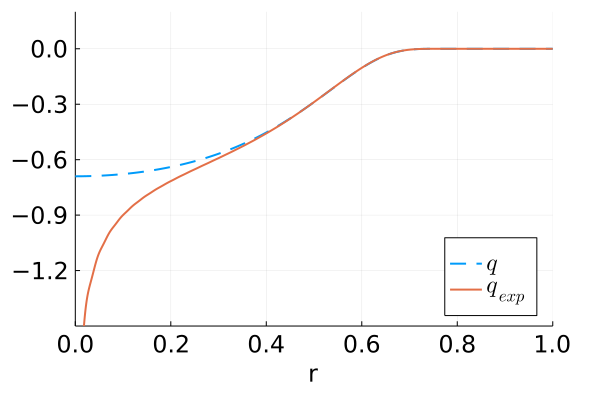} &
    \includegraphics[width=7cm]{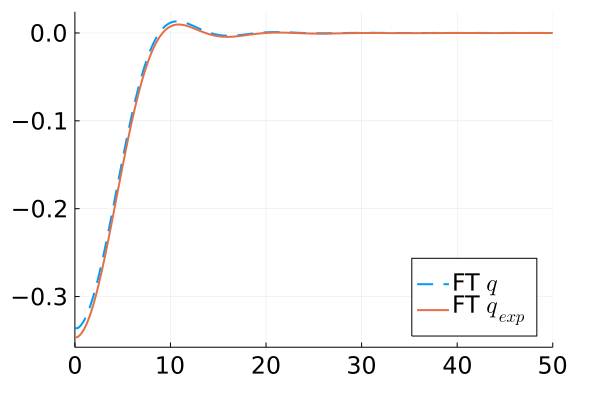}\\
        \includegraphics[width=7cm]{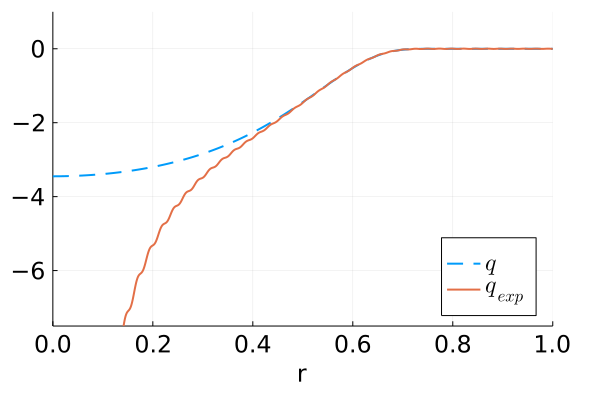} &
    \includegraphics[width=7cm]{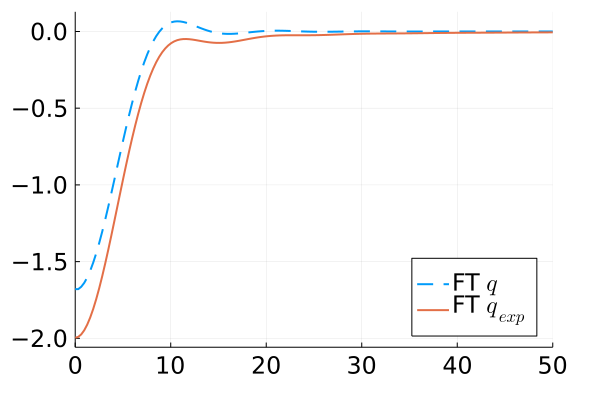}\\
        \includegraphics[width=7cm]{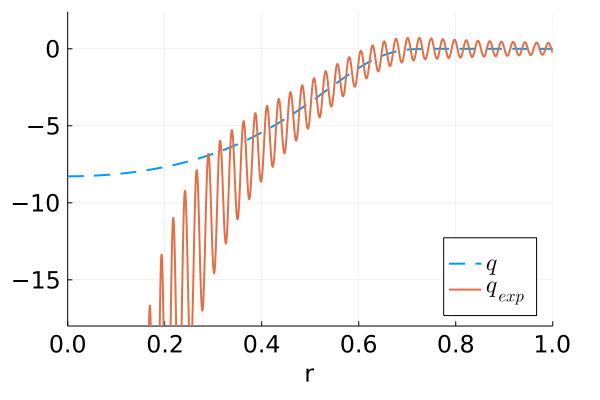} &
    \includegraphics[width=7cm]{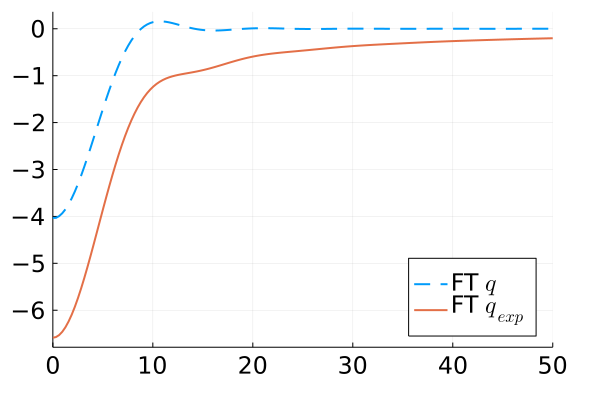}
    \end{tabular}
    \caption{Experiment 10: Born approximation of a potential  (left) and its Fourier transform (right) when we consider negative potentials with larger size (from top to bottom).}
    \label{fig:ex6}
\end{figure}

\section{Iterative algorithm} \label{label:algorithm}

In this section we illustrate the efficiency of the iterative algorithm described in \eqref{eq:ga_itera_1} for the conductivity and in \eqref{eq:itera_q} for the potential.

{\bf Experiment 11.} In \Cref{fig:ex_it11} we show the first iterations of the algorithm described in \eqref{eq:ga_itera_1} when considering a Lipschitz conductivity and a smooth one. At each iteration the conductivity is better approximated. The behavior of the $L^2$-error and $L^\infty$-error are also illustrated in  \Cref{fig:ex_it11_b}. We observe that the rate of convergence depends on the regularity of $\gamma$. Note also that the piecewise constant conductivity considered in Experiment 1 above (in green-star) produces a decreasing error only for the $L^2-$norm. This is due to the Gibbs phenomenon which is present since we only compute a low pass filter of the conductivity.     

\begin{figure}
    \centering
    \begin{tabular}{cc}
    \includegraphics[width=7cm]{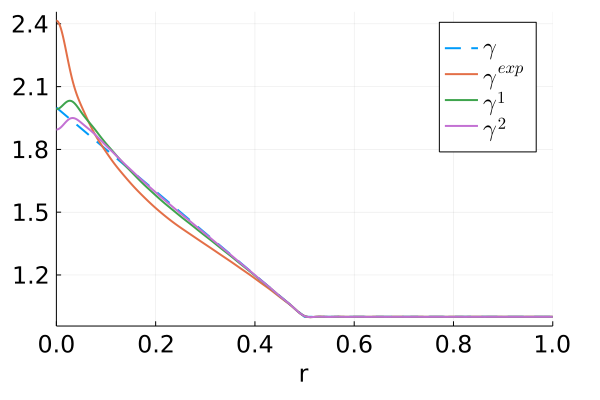} &
    \includegraphics[width=7cm]{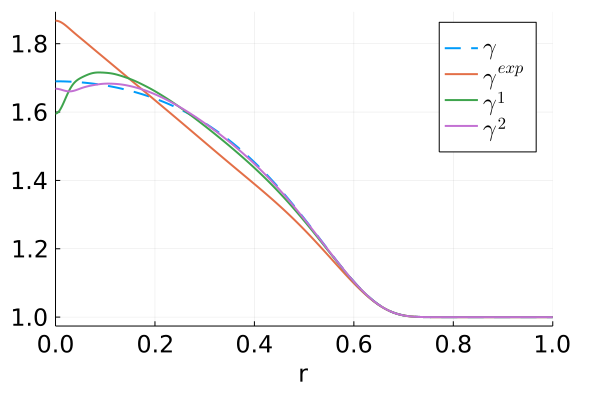} 
    \end{tabular}
    \caption{Experiment 11: Approximation of   a Lipschitz conductivity (left simulation) and a smooth one (right simulation) by the iterative algorithm \eqref{eq:ga_itera_1}.}
    \label{fig:ex_it11}
\end{figure}

\begin{figure}
    \centering
    \begin{tabular}{cc}
    \includegraphics[width=7cm]{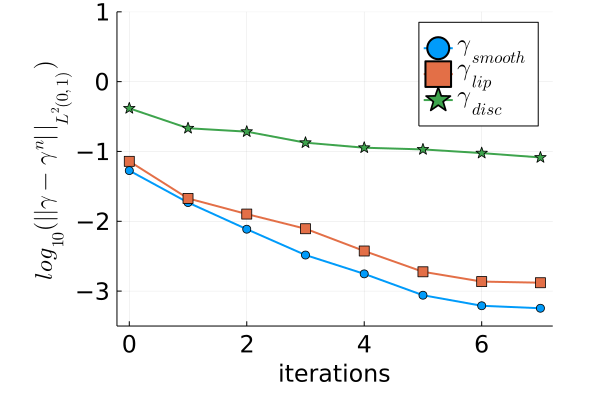} &
    \includegraphics[width=7cm]{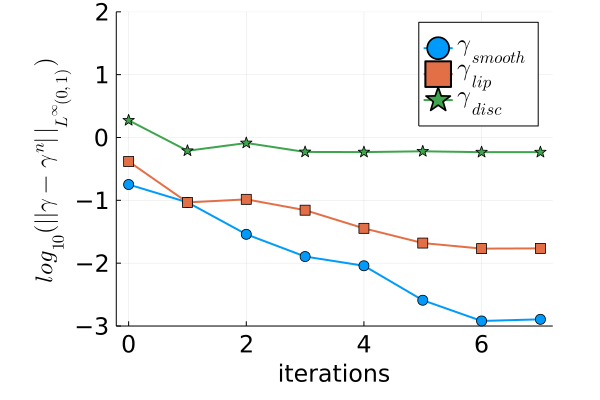} 
    \end{tabular}
    \caption{Experiment 11: $L^2$-error (left) and $L^\infty$-error (right) in $log_{10}$-scale for the iterative algorithm  when considering the piecewise constant function in the upper simulation of \Cref{fig:ex7}, together with the Lipschitz  and smooth  conductivities  of \Cref{fig:ex_it11}.}
    \label{fig:ex_it11_b}
\end{figure}

{\bf Experiment 12.} In \Cref{fig:ex_it12} we show the first iterations of the algorithm described in \eqref{eq:itera_q} when considering a discontinuous potential and a smooth one. At each iteration the approximation improves in a increasingly larger set. The behavior of the $L^2$-error and $L^\infty$-error are also illustrated in \Cref{fig:ex_it12_b}. We observe that the smoothness of the potential affects to the error behavior. In fact, for the discontinuous potentials it stabilizes after a few iterations. 

\begin{figure}
    \centering
    \begin{tabular}{cc}
    \includegraphics[width=7cm]{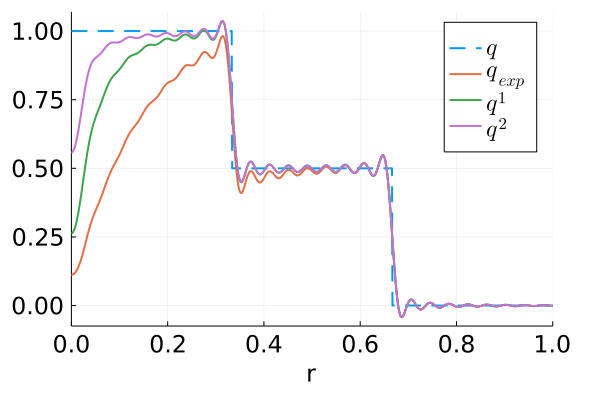} &
    \includegraphics[width=7cm]{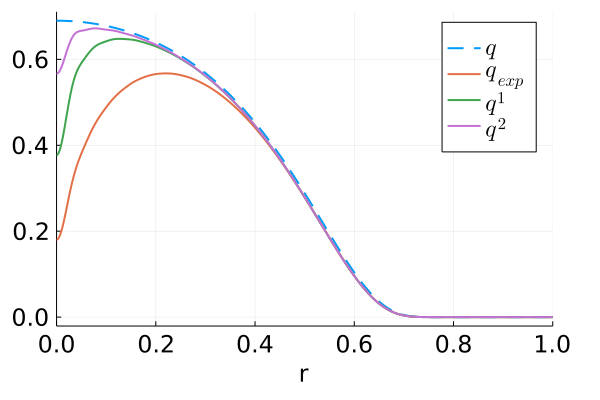} 
    \end{tabular}
    \caption{Experiment 12: Approximation of   a step potential (left simulation) and a smooth one (right simulation) by the iterative algorithm \eqref{eq:itera_q}.}
    \label{fig:ex_it12}
\end{figure}

\begin{figure}
    \centering
    \begin{tabular}{cc}
    \includegraphics[width=7cm]{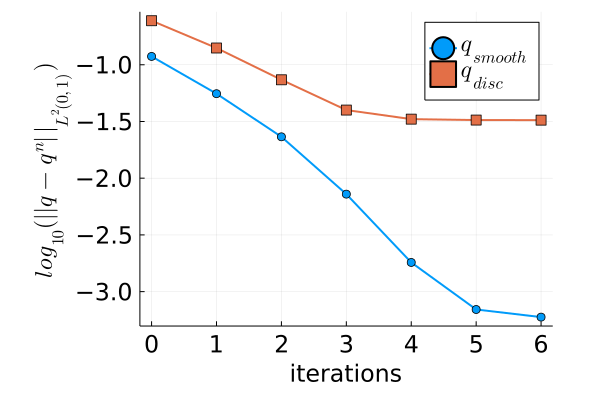} &
    \includegraphics[width=7cm]{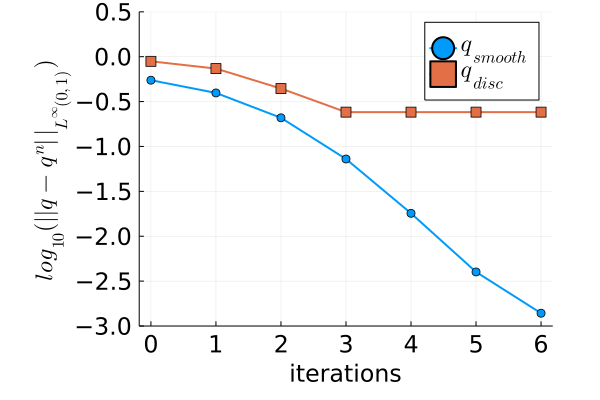} 
    \end{tabular}
    \caption{Experiment 12: $L^2$-error (left) and $L^\infty$-error (right) in $log_{10}$-scale for the iterative algorithm  when considering both the smooth potential and the discontinuous potential of \Cref{fig:ex_it12}.}
    \label{fig:ex_it12_b}
\end{figure}

\clearpage

\appendix
\section{Support of the Born approximation}   \label{sec:appendix_support}

Numerical results in \Cref{sec:numerico} suggest that $\gae -1$ and $\qe$ are supported always in the same ball as, respectively, $\gamma-1$ and $q$. 
As far as we know this is a property that does not hold  in  inverse scattering problems where the Born approximation is commonly used. 
In the rest of this section we show a proof of this property provided that one assumes that  formulas \eqref{id:radial_born} or \eqref{id:gae_formula} give a  tempered distribution.
 
We start by stating the following version of Paley-Wiener theorem.
\begin{proposition} \label{prop:paley_wiener}
Let $F \in \mathcal  S'(\R^d)$ be  a tempered distribution such that its Fourier transform has an holomorphic extension $\widetilde{F}(z)$ to all $\IC^d$. Assume also that
\[
|\widetilde{F}(z)| \le  C e^{\alpha|z|},
\] 
for constants $\alpha, C>0$. Then $F$ is supported in $B_\alpha \subset \R^d$, the ball of radius $\alpha$.
\end{proposition}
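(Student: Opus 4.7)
The plan is to deduce this from the classical Paley--Wiener--Schwartz theorem, the nontrivial point being that our hypothesis controls $|\widetilde{F}(z)|$ by $e^{\alpha|z|}$, whereas the standard Paley--Wiener bound involves only the imaginary part of $z$ (with polynomial growth in the real part). Bridging this gap is where the temperedness of $F$ enters.

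First, since $\widetilde{F}$ is entire on $\IC^d$, its restriction to $\R^d$ is a continuous function and this function represents the tempered distribution $\widehat{F}$. A continuous tempered distribution is a function of at most polynomial growth, so there exist $C>0$ and $N\in\N$ with
\[
|\widetilde{F}(\xi)| \le C(1+|\xi|)^N, \qquad \xi\in\R^d.
\]
The goal of the main step is to upgrade this together with the hypothesis $|\widetilde{F}(z)|\le C e^{\alpha|z|}$ on $\IC^d$ to a genuine Paley--Wiener bound
\[
|\widetilde{F}(\xi + i\eta)| \le C'(1+|\xi|+|\eta|)^M\, e^{\alpha|\eta|}, \qquad \xi,\eta\in\R^d.
\]

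To obtain this, I would run a Phragm\'en--Lindel\"of argument one complex variable at a time. Fix real values for $d-1$ of the coordinates and let $\zeta$ vary in $\IC$ in the remaining slot; the resulting slice function $g$ is entire in $\zeta$, of exponential type $\alpha$, and polynomially bounded on the real line. Applying Phragm\'en--Lindel\"of separately in the upper and lower half-planes (after multiplying by the auxiliary factor $e^{\pm i\alpha\zeta}/(1\mp i\zeta)^{N+2}$ to kill the exponential contribution from $e^{\alpha|\zeta|}$ and to absorb the polynomial on $\R$) produces the bound $|g(\zeta)|\le C''(1+|\zeta|)^{N}e^{\alpha|\mathrm{Im}\,\zeta|}$. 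Iterating this procedure across the $d$ coordinates yields the desired Paley--Wiener estimate on all of $\IC^d$.

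Once this bound is established, the classical Paley--Wiener--Schwartz theorem for tempered distributions (in the version where the Fourier transform is an entire function satisfying a Paley--Wiener bound of order $\alpha$ in the imaginary part and polynomial growth in the real part) directly implies $\supp F \subset \overline{B_\alpha}$, which is the conclusion. The main obstacle is purely technical and lies in the Phragm\'en--Lindel\"of step: the exponential growth $e^{\alpha|z|}$ is exactly critical for the half-plane, so one must be careful to replace the global modulus bound by a bound on the imaginary part via an explicit auxiliary function before invoking the maximum principle, and the iteration over the $d$ complex variables requires that the intermediate bounds be of the same mixed polynomial--exponential type at each step.
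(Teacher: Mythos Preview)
Your overall strategy is reasonable, but there is a genuine gap in your first step. You write: ``A continuous tempered distribution is a function of at most polynomial growth, so there exist $C>0$ and $N\in\N$ with $|\widetilde{F}(\xi)| \le C(1+|\xi|)^N$.'' This implication is false in general. A standard counterexample in one variable is $g(\xi)=e^{\xi}\cos(e^{\xi})=\frac{d}{d\xi}\sin(e^{\xi})$: this is a continuous function, and it is tempered because it is the distributional derivative of the bounded function $\sin(e^{\xi})$, yet it is certainly not polynomially bounded. Since your Phragm\'en--Lindel\"of argument in step~2 requires polynomial control of $\widetilde{F}$ on the real axis as its input, the argument as written does not get off the ground.

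The conclusion you need in step~1 is nonetheless true \emph{under the full hypotheses} (entire of exponential type~$\alpha$ together with temperedness on~$\R^d$), but proving it already requires essentially the same circle of ideas you defer to step~2. One way to repair the argument: use the structure theorem to write $\widetilde{F}|_{\R^d}=\partial^\beta H$ with $H$ continuous and polynomially bounded; since $\widetilde{F}$ is real-analytic, $H$ can be taken to be an iterated primitive of $\widetilde{F}$ plus a polynomial, hence $H$ extends to an entire function of exponential type~$\alpha$ that \emph{is} polynomially bounded on $\R^d$; run Phragm\'en--Lindel\"of on $H$ to obtain $|H(z)|\le C(1+|z|)^M e^{\alpha|\Im z|}$; then recover the same type of bound for $\widetilde{F}=\partial^\beta H$ via Cauchy's estimates. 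This closes the gap and, in fact, yields the Paley--Wiener--Schwartz bound directly.

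By contrast, the paper does not give a detailed argument at all: it simply invokes \cite[Theorem~4.9]{steinweiss} (the $L^2$ Paley--Wiener theorem, whose hypothesis is precisely an estimate $|G(z)|\le Ce^{\alpha|z|}$ together with $G|_{\R^d}\in L^2$) and says that the tempered case follows ``by using an appropriate mollification of the tempered distribution $F$''. So the paper's route is mollify first (to land in a function space where Stein--Weiss applies), then cite; yours is to establish the Paley--Wiener--Schwartz estimate directly via Phragm\'en--Lindel\"of. Both are viable, but your write-up needs the correction above before the Phragm\'en--Lindel\"of step is justified.
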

This version of Paley Wiener follows   from \cite[Theorem 4.9]{steinweiss} by using an appropriate mollification of the tempered distribution $F$. 

We now reduce to the case $d=3$ for simplicity, since the higher dimensional cases follow with minimal modifications. Define the following holomorphic function in $\IC^3$:
\begin{equation} \label{id:qexp_zeta}
\widetilde{\qe}(z) =2 \pi^{3/2}  \sum_{k=0}^\infty  \frac{  \left(z\cdot z\right)^{k}  2^{-2k}}{k! \Gamma(k+3/2)}
 (\lambda_{k}[q] -k),
\end{equation}
where the series is absolutely convergent by \eqref{est:moment_aprox_radial}. The function $\widetilde{\qe}(z)$ is the holomorphic extension of $ \widehat{\qe}(\xi)$ from $\R^3$ to   $\IC^3$.
\begin{lemma} 
Let $q \in\cQ_d^1$ be a radial potential supported in $B_\alpha \subseteq B$.  Then
\begin{equation} \label{est:analytic_est}
|\widetilde{\qe}(z)| \lesssim    e^{\alpha|z|}  ,
\end{equation}
where the implicit constants depends on $q$.
\end{lemma}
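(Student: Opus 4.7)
The plan is to bound the series \eqref{id:qexp_zeta} termwise, using the trivial estimate $|z\cdot z|\le |z|^{2}$ for the bilinear product on $\IC^{3}$, together with the decay of the eigenvalues given by \eqref{est:moment_aprox_radial}, so as to produce a power series in $|z|$ comparable to $e^{\alpha|z|}$.

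The first step is to replace the Gamma factor by a cleaner one. Using the elementary identity $\Gamma(k+1/2)=(2k)!\sqrt{\pi}/(4^{k}k!)$ one gets
\[
k!\,\Gamma(k+3/2)=\frac{\sqrt{\pi}\,(2k+1)!}{2\cdot 4^{k}}.
\]
Combining this with $|(z\cdot z)^{k}|\le |z|^{2k}$ turns \eqref{id:qexp_zeta} into the pointwise bound
\[
|\widetilde{\qe}(z)|\le 4\pi\sum_{k=0}^{\infty}\frac{|z|^{2k}}{(2k+1)!}\,|\lambda_{k}[q]-k|.
\]

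Next, I would split this sum at the threshold index $k_{0}:=\lceil \alpha\|q\|_{L^{\infty}(B)}^{1/2}-1/2\rceil$ coming from \eqref{est:moment_aprox_radial}. For $k\ge k_{0}$ that estimate gives $|\lambda_{k}[q]-k|\lesssim \alpha^{2k}$ (the factor $\|q\|_{L^{\infty}}/(2k+3)$ being absorbed into the implicit constant), so the tail is dominated by
\[
\sum_{k\ge k_{0}}\frac{(\alpha|z|)^{2k}}{(2k+1)!}\le \sum_{k=0}^{\infty}\frac{(\alpha|z|)^{2k}}{(2k)!}=\cosh(\alpha|z|)\le e^{\alpha|z|}.
\]
The finitely many remaining indices $k<k_{0}$ contribute a polynomial in $|z|$ of degree at most $2(k_{0}-1)$; since any polynomial is dominated by a constant multiple of $e^{\alpha|z|}$ (with constant depending on the degree and the coefficients, hence only on $q$), this completes the bound.

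There is no substantive obstacle: the argument is essentially bookkeeping once \eqref{est:moment_aprox_radial} and the Gamma identity are in hand. The only points to verify are that the threshold $k_{0}$ depends only on $q$ and $\alpha$, and that the individual terms $|\lambda_{k}[q]-k|$ with $k<k_{0}$ are bounded in terms of $q$ alone, which follows from the min-max bound $\lambda_{k}[q]\lesssim k$ mentioned in the introduction.
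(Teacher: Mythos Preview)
Your proof is correct and follows essentially the same route as the paper's: both use the identity $k!\,\Gamma(k+3/2)=\sqrt{\pi}\,(2k+1)!/(2\cdot 4^{k})$ to rewrite the general term, then feed in \eqref{est:moment_aprox_radial} to produce a series dominated by $\cosh(\alpha|z|)$. The only cosmetic difference is that the paper absorbs the finitely many low-index terms directly into the implicit constant (writing \eqref{est:lambda_k_dif} ``for all $k\ge 0$'') rather than splitting explicitly at your threshold $k_{0}$, but this is the same argument packaged differently.
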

\begin{proof}
By \eqref{est:moment_aprox_radial} we have   for all $k\ge 0$ that
\begin{equation} \label{est:lambda_k_dif}
    |\lambda_{k}[q]-k|\lesssim    \frac{\alpha^{2k}}{2k+3},
\end{equation}
where the implicit constant depends on $q$. 
On the other hand, we have the identity
\begin{equation} \label{id:Gamma}
k! \Gamma\left(k + 1/2 \right) = \sqrt{\pi} (2k)! 2^{-2k},
\end{equation}
which implies that
\[
\frac{\sqrt{\pi}}{k! \Gamma\left(k + 3/2 \right)} 2^{-2k} = \frac{2}{(2k+1)} \frac{1}{(2k)!} .
\]
 Using this together with \eqref{est:lambda_k_dif} and  \eqref{id:qexp_zeta},  we obtain that
\begin{align*}
|\widetilde{\qe}(z)| &\lesssim  \sum_{k=0}^\infty  \frac{ |z|^{2k}  2^{-2k}}{k! \Gamma(k+3/2)}
     |\lambda_k[q] - k  | \\
&\lesssim     \sum_{k=0}^\infty \frac{1}{(2k+3)^2} \frac{\,|\alpha z|^{2k}   }{(2k)!}  \le   e^{\alpha|z|}.
\end{align*}
This finishes the proof of the lemma.
\end{proof}
Combining \Cref{prop:paley_wiener} with the previous lemma, we immediately obtain the following result.
\begin{proposition}
Let $q \in\cQ_d^1$ be a radial potential supported in $B_\alpha \subseteq B$.  Assume that the the function given by  \eqref{id:radial_born}  is a tempered distribution. Then $\qe$ is     supported   in $B_\alpha$.
\end{proposition}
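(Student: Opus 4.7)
The plan is to combine the two preceding results in the obvious way: the Paley--Wiener style \Cref{prop:paley_wiener} is tailor-made for the situation at hand, and the preceding Lemma supplies exactly the exponential bound that its hypothesis demands. The only thing I really need to verify carefully is that $\widetilde{\qe}(z)$ defined by \eqref{id:qexp_zeta} is indeed the holomorphic extension of the Fourier transform of the tempered distribution $\qe$.

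Concretely, I would proceed as follows. First, by assumption $\qe$ is a tempered distribution and, by definition, its Fourier transform is the distribution associated to the function given by \eqref{id:radial_born}. The function $\widetilde{\qe}(z)$ introduced in \eqref{id:qexp_zeta} agrees with the right-hand side of \eqref{id:radial_born} when $z = \xi \in \R^3$, because $\xi\cdot\xi = |\xi|^2$. Moreover, the power series in \eqref{id:qexp_zeta} converges absolutely on all of $\IC^3$ (as was already argued just below \eqref{id:qexp_zeta} using \eqref{est:moment_aprox_radial}) and is a power series in the single holomorphic variable $z\cdot z$, hence it defines an entire function on $\IC^3$. Therefore $\widetilde{\qe}$ is the unique holomorphic extension of $\widehat{\qe}$.

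Next, I apply the Lemma just proved to obtain the bound $|\widetilde{\qe}(z)| \lesssim e^{\alpha|z|}$ valid for all $z\in\IC^3$, with constants depending on $q$. At this point all hypotheses of \Cref{prop:paley_wiener} are satisfied with the distribution $F = \qe$ and radius $\alpha$, so the conclusion gives $\supp \qe \subseteq B_\alpha$.

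The argument is essentially a bookkeeping one since the heavy lifting is contained in the Lemma and in \Cref{prop:paley_wiener}. The only genuine subtlety is ensuring that we can legitimately invoke Paley--Wiener: this requires that the holomorphic extension agrees with the Fourier transform of the tempered distribution $\qe$ in the sense of distributions (not merely pointwise on $\R^3$), which is immediate here because both are given by the same locally integrable function on $\R^3$ of at most polynomial growth. No modifications to the three-dimensional proof are needed for higher $d\ge 3$ beyond replacing \eqref{id:Gamma} by the analogous duplication formula for $\Gamma(k+d/2)$, which is why the Lemma was stated in dimension three without loss of generality.
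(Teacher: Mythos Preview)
Your proposal is correct and follows exactly the same route as the paper: combine the exponential bound from the preceding Lemma with the Paley--Wiener \Cref{prop:paley_wiener} to conclude that $\qe$ is supported in $B_\alpha$. The paper's own proof is the one-line statement ``Combining \Cref{prop:paley_wiener} with the previous lemma, we immediately obtain the following result''; your version simply makes explicit the verification that $\widetilde{\qe}$ is the entire extension of $\widehat{\qe}$, which the paper had already noted just after introducing \eqref{id:qexp_zeta}.
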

Analogously one can prove the following proposition for the conductivity problem.
\begin{proposition}
Let $\gamma \in W^{2,\infty}(B,\R_+)$  be a radial conductivity such that $\gamma-1$ is supported in $B_\alpha \subseteq B$.  Assume that the function given by  \eqref{id:gae_formula}  is a tempered distribution. Then $\gae-1$ is     supported   in $B_\alpha$.
\end{proposition}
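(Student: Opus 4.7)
The plan is to adapt the Paley--Wiener argument just used for $\qe$ directly to formula \eqref{id:gae_formula}. Taking $R=1$ and $d=3$, this formula expresses $\widehat{(\gae-1)}(\xi)$ as a power series in $|\xi|^2$ whose lowest-order term is $|\xi|^0$ (the sum starts at $k=1$, so no negative powers appear). Replacing $|\xi|^{2(k-1)}$ by the polynomial $(z\cdot z)^{k-1}$ therefore produces a natural candidate entire extension
\[
\widetilde{(\gae-1)}(z) := -\pi^{3/2}\sum_{k=1}^\infty \frac{(-1)^k}{k!\,\Gamma(k+3/2)}\, 2^{-2(k-1)}\,(z\cdot z)^{k-1}\,(\lambda_k[\gamma]-k),\qquad z\in\IC^3,
\]
and the task reduces to controlling its growth at infinity.

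Next I would feed in the Sylvester--Uhlmann reduction. Since $\gamma\in W^{2,\infty}(B,\R_+)$ and $\gamma-1$ is supported in $B_\alpha\subseteq B$, the potential $q=\gamma^{-1/2}\Delta\gamma^{1/2}$ belongs to $L^\infty(B,\R)$, vanishes wherever $\sqrt\gamma$ is constant (hence is supported in $B_\alpha$), and lies in $\cQ_3^1$ by well-posedness of the conductivity problem. The boundary conditions $\gamma-1=0$ and $\partial_\nu\gamma=0$ on $\partial B$ follow from the support hypothesis, so \eqref{id:dn_maps} gives $\Lambda_\gamma=\Lambda[q]$ and therefore $\lambda_k[\gamma]=\lambda_k[q]$ for every $k$. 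Estimate \eqref{est:moment_aprox_radial} then yields $|\lambda_k[\gamma]-k|\lesssim \alpha^{2k}/(2k+3)$ for all $k$ large enough; any finite initial block of $k$'s contributes only a polynomial in $z$, which is harmless.

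With this eigenvalue bound in hand, I would run the same Gamma-function computation as in the potential case. Using the identity \eqref{id:Gamma} together with $\Gamma(k+3/2)=(k+1/2)\Gamma(k+1/2)$ to simplify $1/(k!\,\Gamma(k+3/2))$, and writing $|z|^{2(k-1)}\alpha^{2k}=\alpha^2(\alpha|z|)^{2(k-1)}$, the series is dominated by
\[
|\widetilde{(\gae-1)}(z)|\lesssim \alpha^2\sum_{k=1}^\infty \frac{(\alpha|z|)^{2(k-1)}}{(k+1/2)(2k+3)(2k)!}\lesssim e^{\alpha|z|},
\]
showing that $\widetilde{(\gae-1)}$ is entire of exponential type at most $\alpha$. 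Applying \Cref{prop:paley_wiener} under the hypothesis that \eqref{id:gae_formula} is a tempered distribution concludes that $\gae-1$ is supported in $B_\alpha$.

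The main obstacle is purely bookkeeping: handling the finitely many small $k$ where \eqref{est:moment_aprox_radial} is not directly applicable. This is absorbed into the final estimate because any finite sum of entire polynomial terms is of polynomial growth $O(|z|^N)$ and hence dominated by $e^{\alpha|z|}$. Higher dimensions would proceed identically, using $\Gamma(k+d/2)$ and the analogous double-factorial identity in place of \eqref{id:Gamma}.
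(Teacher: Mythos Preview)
Your proposal is correct and follows exactly the approach the paper intends: the paper's own proof is two lines stating that one should combine \Cref{prop:paley_wiener} with ``an estimate analogous to \eqref{est:analytic_est} for the analytic extension of $\widehat{\gae-1}(\xi)$'', and you have carried out precisely that estimate. Your explicit reduction to $\lambda_k[\gamma]=\lambda_k[q]$ via \eqref{id:dn_maps} and your treatment of the finitely many small $k$ are both the natural (and correct) ways to fill in the details the paper leaves implicit.
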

\begin{proof}
It follows from \Cref{prop:paley_wiener} and an estimate analogous to \eqref{est:analytic_est} for the analytic extension of $\widehat{\gae-1}(\xi)$.
\end{proof}

\bibliographystyle{alp}

\bibliography{references_calderon_numerico}

\newcommand{\etalchar}[1]{$^{#1}$}
\begin{thebibliography}{ALMM{\etalchar{+}}01}
\expandafter\ifx\csname url\endcsname\relax
  \def\url#1{\texttt{#1}}\fi
\expandafter\ifx\csname doi\endcsname\relax
  \def\doi#1{\burlalt{doi:#1}{http://dx.doi.org/#1}}\fi
\expandafter\ifx\csname urlprefix\endcsname\relax\def\urlprefix{URL }\fi
\expandafter\ifx\csname href\endcsname\relax
  \def\href#1#2{#2}\fi
\expandafter\ifx\csname burlalt\endcsname\relax
  \def\burlalt#1#2{\href{#2}{#1}}\fi

\bibitem[AGLT02]{moment_theory_book}
D.~D. Ang, R. Gorenflo, V.~K. Le, and D.~D. Trong.
\newblock {\em Moment theory and some inverse problems in potential theory and
  heat conduction}, volume 1792 of {\em Lecture Notes in Mathematics}.
\newblock Springer-Verlag, Berlin, 2002.
\newblock \doi{10.1007/b84019}.

\bibitem[Ale88]{Alessandrini88}
G. Alessandrini.
\newblock Stable determination of conductivity by boundary measurements.
\newblock {\em Appl. Anal.}, 27(1-3):153--172, 1988.
\newblock \doi{10.1080/00036818808839730}.

\bibitem[ALMM{\etalchar{+}}01]{Assenheimer_2001}
M. Assenheimer, O. Laver-Moskovitz, D. Malonek, D. Manor, U. Nahaliel, R.
  Nitzan, and A. Saad.
\newblock The t-{SCANTMtechnology}: electrical impedance as a diagnostic tool
  for breast cancer detection.
\newblock {\em Physiological Measurement}, 22(1):1--8, feb 2001.
\newblock \doi{10.1088/0967-3334/22/1/301}.

\bibitem[BCFN]{Fagueye}
J.~A. Barceló, C. Castro, D. Faraco, and F. Ndiaye.
\newblock Explicit formulas for the dirichlet to neuman map of the schrödinger
  operator with radial potentials in 3d.
\newblock {\em In preparation}.

\bibitem[BCLV18]{BCLV18}
J.~A. Barcel\'{o}, C. Castro, T. Luque, and M.~C. Vilela.
\newblock A new convergent algorithm to approximate potentials from fixed angle
  scattering data.
\newblock {\em SIAM J. Appl. Math.}, 78(5):2714--2736, 2018.
\newblock \doi{10.1137/18M1172247}.

\bibitem[BCLV19]{BCLV19}
J.~A. Barcel\'{o}, C. Castro, T. Luque, and M.~C. Vilela.
\newblock Corrigendum: {A} new convergent algorithm to approximate potentials
  from fixed angle scattering data.
\newblock {\em SIAM J. Appl. Math.}, 79(6):2688--2691, 2019.
\newblock \doi{10.1137/19M1278508}.

\bibitem[BCMM21]{born_aprox}
J.~A. Barceló, C. Castro, F. Macià, and C.~J. Meroño.
\newblock The born approximation in the three-dimensional calderón problem,
  2021, \burlalt{arXiv:2109.06607}{http://arxiv.org/abs/arXiv:2109.06607}.

\bibitem[BCMM22]{BCMM-P}
J.~A. Barceló, C. Castro, F. Macià, and C.~J. Meroño.
\newblock Born approximation in the calderon problem.
\newblock \url{https://github.com/carloscastroba/Born_approximation}, 2022.

\bibitem[BCR16]{BCR16}
J.~A. Barcel\'{o}, C. Castro, and J.~M. Reyes.
\newblock Numerical approximation of the potential in the two-dimensional
  inverse scattering problem.
\newblock {\em Inverse Problems}, 32(1):015006, 19, 2016.
\newblock \doi{10.1088/0266-5611/32/1/015006}.

\bibitem[BKM11]{BKM11}
J. Bikowski, K. Knudsen, and J.~L. Mueller.
\newblock Direct numerical reconstruction of conductivities in three dimensions
  using scattering transforms.
\newblock {\em Inverse Problems}, 27(1):015002, 19, 2011.
\newblock \doi{10.1088/0266-5611/27/1/015002}.

\bibitem[CIN99]{cheney99}
M. Cheney, D. Isaacson, and J.~C. Newell.
\newblock Electrical impedance tomography.
\newblock {\em SIAM Review}, 41(1):85--101, 1999.
\newblock \doi{10.1137/S0036144598333613}.

\bibitem[CR16]{CaroRogers16}
P. Caro and K.~M. Rogers.
\newblock Global uniqueness for the {C}alder\'{o}n problem with {L}ipschitz
  conductivities.
\newblock {\em Forum Math. Pi}, 4:e2, 28, 2016.
\newblock \doi{10.1017/fmp.2015.9}.

\bibitem[DHK11]{Delbary_2011}
F. Delbary, P.~C. Hansen, and K. Knudsen.
\newblock A direct numerical reconstruction algorithm for the 3d calder{\'{o}}n
  problem.
\newblock {\em Journal of Physics: Conference Series}, 290:012003, apr 2011.
\newblock \doi{10.1088/1742-6596/290/1/012003}.

\bibitem[DHK12]{DHK12}
F. Delbary, P.~C. Hansen, and K. Knudsen.
\newblock Electrical impedance tomography: 3{D} reconstructions using
  scattering transforms.
\newblock {\em Appl. Anal.}, 91(4):737--755, 2012.
\newblock \doi{10.1080/00036811.2011.598863}.

\bibitem[DK14]{DelbaryKnudsen14}
F. Delbary and K. Knudsen.
\newblock Numerical nonlinear complex geometrical optics algorithm for the 3{D}
  {C}alder\'{o}n problem.
\newblock {\em Inverse Probl. Imaging}, 8(4):991--1012, 2014.
\newblock \doi{10.3934/ipi.2014.8.991}.

\bibitem[DKN21]{daude2}
T. Daud\'{e}, N. Kamran, and F. Nicoleau.
\newblock Stability in the inverse {S}teklov problem on warped product
  {R}iemannian manifolds.
\newblock {\em J. Geom. Anal.}, 31(2):1821--1854, 2021.
\newblock \doi{10.1007/s12220-019-00326-9}.

\bibitem[GH22]{GH22arxiv}
H. Garde and N. Hyvönen.
\newblock Linearised calderón problem: Reconstruction and lipschitz stability
  for infinite-dimensional spaces of unbounded perturbations, 2022,
  \burlalt{arXiv:2204.10164}{http://arxiv.org/abs/arXiv:2204.10164}.

\bibitem[GLS{\etalchar{+}}18]{GLSSU2018}
A. Greenleaf, M. Lassas, M. Santacesaria, S. Siltanen, and G. Uhlmann.
\newblock Propagation and recovery of singularities in the inverse conductivity
  problem.
\newblock {\em Anal. PDE}, 11(8):1901--1943, 2018.
\newblock \doi{10.2140/apde.2018.11.1901}.

\bibitem[Hab15]{Haberman15}
B. Haberman.
\newblock Uniqueness in {C}alder\'{o}n's problem for conductivities with
  unbounded gradient.
\newblock {\em Comm. Math. Phys.}, 340(2):639--659, 2015.
\newblock \doi{10.1007/s00220-015-2460-3}.

\bibitem[HIK{\etalchar{+}}21]{HIKMTB21}
S.~J. Hamilton, D. Isaacson, V. Kolehmainen, P.~A. Muller, J. Toivainen, and
  P.~F. Bray.
\newblock 3{D} electrical impedance tomography reconstructions from simulated
  electrode data using direct inversion {$\rm t^{\exp}$} and {C}alder\'{o}n
  methods.
\newblock {\em Inverse Probl. Imaging}, 15(5):1135--1169, 2021.
\newblock \doi{10.3934/ipi.2021032}.

\bibitem[IMNS06]{Isaacson_2006}
D. Isaacson, J.~L. Mueller, J.~C. Newell, and S. Siltanen.
\newblock Imaging cardiac activity by the d-bar method for electrical impedance
  tomography.
\newblock {\em Physiological Measurement}, 27(5):S43--S50, apr 2006.
\newblock \doi{10.1088/0967-3334/27/5/s04}.

\bibitem[KM11]{KnudsenMueller11}
K. Knudsen and J.~L. Mueller.
\newblock The {B}orn approximation and {C}alder\'{o}n's method for
  reconstruction of conductivities in 3-{D}.
\newblock {\em Discrete Contin. Dyn. Syst.}, Dynamical systems, differential
  equations and applications. 8th AIMS Conference. Suppl. Vol. II:844--853,
  2011.

\bibitem[KRS21]{KRS21}
H. Koch, A. Rüland, and M. Salo.
\newblock On instability mechanisms for inverse problems.
\newblock {\em Ars Inveniendi Analytica}, (paper nº7):93pp., 2021.
\newblock \doi{10.15781/c93s-pk62}.

\bibitem[Man01]{Mand00}
N. Mandache.
\newblock Exponential instability in an inverse problem for the
  {S}chr\"{o}dinger equation.
\newblock {\em Inverse Problems}, 17(5):1435--1444, 2001.
\newblock \doi{10.1088/0266-5611/17/5/313}.

\bibitem[Mer18]{fix}
C.~J. Meroño.
\newblock Fixed angle scattering: recovery of singularities and its
  limitations.
\newblock {\em SIAM J. Math. Anal.}, 50(5):5616--5636, 2018.
\newblock \doi{10.1137/18M1164871}.

\bibitem[Mer19]{back}
C.~J. Meroño.
\newblock Recovery of the singularities of a potential from backscattering data
  in general dimension.
\newblock {\em J. Differential Equations}, 266(10):6307--6345, 2019.
\newblock \doi{10.1016/j.jde.2018.11.003}.

\bibitem[MS12]{MuellerSiltanenBook}
J.~L. Mueller and S. Siltanen.
\newblock {\em Linear and nonlinear inverse problems with practical
  applications}, volume~10 of {\em Computational Science \& Engineering}.
\newblock Society for Industrial and Applied Mathematics (SIAM), Philadelphia,
  PA, 2012.
\newblock \doi{10.1137/1.9781611972344}.

\bibitem[MS20]{MuellerSiltanen20}
J.~L. Mueller and S. Siltanen.
\newblock The {D}-bar method for electrical impedance tomography---demystified.
\newblock {\em Inverse Problems}, 36(9):093001, 28, 2020.
\newblock \doi{10.1088/1361-6420/aba2f5}.

\bibitem[{Nac}88]{nachman88}
A.~I. {Nachman}.
\newblock {Reconstructions from boundary measurements.}
\newblock {\em {Ann. Math. (2)}}, 128(3):531--576, 1988.
\newblock \doi{10.2307/1971435}.

\bibitem[STY01]{STY01}
S. Saitoh, V.~K. Tuan, and M. Yamamoto.
\newblock Conditional stability of a real inverse formula for the {L}aplace
  transform.
\newblock {\em Z. Anal. Anwendungen}, 20(1):193--202, 2001.
\newblock \doi{10.4171/ZAA/1010}.

\bibitem[SU87]{SU87}
J. Sylvester and G. Uhlmann.
\newblock A global uniqueness theorem for an inverse boundary value problem.
\newblock {\em Ann. of Math. (2)}, 125(1):153--169, 1987.
\newblock \doi{10.2307/1971291}.

\bibitem[SW71]{steinweiss}
E.~M. Stein and G. Weiss.
\newblock {\em Introduction to {F}ourier analysis on {E}uclidean spaces}.
\newblock Princeton Mathematical Series, No. 32. Princeton University Press,
  Princeton, N.J., 1971.

\bibitem[Uhl92]{Uh92}
G. Uhlmann.
\newblock Inverse boundary value problems and applications.
\newblock Number 207, pages 6, 153--211. 1992.
\newblock M\'{e}thodes semi-classiques, Vol. 1 (Nantes, 1991).

\end{thebibliography}

\end{document}